\newcommand\R{\mathbb{R}}
\newcommand\C{\mathbb{C}}
\newcommand\Z{\mathbb{Z}}
\newcommand\A{{\bf A}}
\newcommand\LL{\mathcal{L}}
\numberwithin{equation}{section}
\newtheorem{proposition}{Proposition}[section]
\newtheorem{definition}{Definition}[section]
\newtheorem{theorem}{Theorem}[section]
\newtheorem{remark}{Remark}[section]
\begin{document}
\title[ Critical magnetic Schr\"odinger operators on flat Euclidean cones ]{Strichartz estimates for Critical magnetic Schr\"odinger operators on flat Euclidean cones}

\author{Xiaofen Gao}
\address{School of Mathematics and Statistics, Zhengzhou University, Zhengzhou, Henan, China, 450001; }
\email{gaoxiaofen@zzu.edu.cn}

\author{Jialu Wang}
\address{School of Mathematics and Statistics, Beijing Institute of Technology, Beijing, China, 100081; }
\email{jialu\_wang@bit.edu.cn}

\author{Chengbin Xu}
\address{School of Mathematics and Statistics, Qinghai Normal University, Xining, Qinghai, China, 810008; }
\email{xcbsph@163.com}

\author{Fang Zhang}
\address{Department of Mathematics, Beijing Institute of Technology, Beijing 100081}
\email{zhangfang@bit.edu.cn}

\begin{abstract} In this paper, we study Schr\"{o}dinger operator $\mathcal{H}_{\mathbf{A}}$ perturbed by critical magnetic potentials on the 2D flat cone $\Sigma = C(\mathbb{S}_\rho^1) = (0, \infty) \times \mathbb{S}_\rho^1$, which is a product cone over the circle $\mathbb{S}_\rho^1 = \mathbb{R}/2\pi \rho \mathbb{Z}$ with radius $\rho > 0$, and equipped with the metric $g = dr^2 + r^2 d\theta^2$. The goal of this work is to establish Strichartz estimates for $\mathcal{H}_{\mathbf{A}}$ in this setting. A key aspect of our approach is the construction of the Schwartz kernel of the resolvent and the spectral measure for Schr\"{o}dinger operator on the flat Euclidean cone $(\Sigma, g)$. The results presented here generalize previous work in \cite{Ford, BFM, FZZ, Zhang1}.
\end{abstract}

\maketitle
\begin{center}
\begin{minipage}{120mm}
   { \small {{\bf Key Words:} Schr\"{o}dinger equation; Wave equation; Strichartz estimates; Critical magnetic potentials; Flat cone.}
   }\\
\end{minipage}
\end{center}

 %\tableofcontents %%代表运行目录，如果不要目录，可以把这句话注释掉。
\section{Introduction}In this paper, we continue our previous work in \cite{GYZZ, GZZ} to investigate the decay estimates for Schr\"{o}dinger operators perturbed by critical magnetic potentials on two-dimensional flat Euclidean cones. As studied in \cite{BFM, CT1, CT2, Ford}, the flat Euclidean cone $ \Sigma = C(\mathbb{S}_\rho^1) = (0, \infty) \times \mathbb{S}_\rho^1 $ is a product cone over the circle $ \mathbb{S}_\rho^1 = \mathbb{R} / 2\pi \rho \mathbb{Z} $, with radius $ \rho > 0 $, and is equipped with the metric $ g = dr^2 + r^2 d\theta^2 $. This space is a generalization of the Euclidean plane $ \mathbb{R}^2 $ and is a special type of metric cone, as concerned in \cite{HL, ZZ1, ZZ2}.

More precisely, in this setting, we study the Strichartz estimates for the Schr\"{o}dinger and wave equations associated with the Schr\"{o}dinger operator $ \mathcal{H}_{\bf A} $. The operator we consider is given by
\begin{equation}\label{oper}
\mathcal{H}_{\bf A} = -\partial_r^2 - \frac{1}{r} \partial_r + \frac{(i\nabla_\theta + {\bf A}(\theta))^2}{r^2}, \qquad (r, \theta) \in \Sigma,
\end{equation}
where ${\bf A}(\theta): C^\infty(\mathbb{S}_\rho^1) \to \mathbb{R}$, and $(i\nabla_\theta + {\bf A}(\theta))^2$ represents the Laplace operator with magnetic potential on $\mathbb{S}_\rho^1$. We focus on its Friedrichs extension in this paper. In particular, when $\Sigma = \mathbb{R}^2$, the operator can be written as
\begin{equation}\label{oper-1}
\mathcal{L}_{\bf A} = \left(i\nabla + \frac{{\bf A}(\hat{x})}{|x|}\right)^2, \qquad x = (x_1, x_2) \in \mathbb{R}^2,
\end{equation}
where $\hat{x} \in \mathbb{S}^1$ and ${\bf A} \in W^{1,\infty}(\mathbb{S}^1, \mathbb{R}^2)$ satisfies the transversal gauge condition
\begin{equation}
{\bf A}(\hat{x}) \cdot \hat{x} = 0, \quad \text{for all} \ \hat{x} \in \mathbb{S}^1.
\end{equation}
This model encompasses an interesting physical Aharonov-Bohm potential \cite{AB}, given by
\begin{equation}\label{AB:po}
{\bf A}_B(\hat{x}) = \alpha \left( -\frac{x_2}{|x|}, \frac{x_1}{|x|} \right), \quad \alpha \in \mathbb{R}.
\end{equation}

We refer the reader to \cite{AB} for the Aharonov-Bohm effect and to \cite{AW} for typical cosmic-string scenarios. Mathematically, the potential in \eqref{oper} is doubly critical due to its scaling invariance and the singularity it possesses, which introduces additional complexity to dispersive estimates. For further studies on dispersive and Strichartz estimates for the Schr\"{o}dinger, wave, and Klein-Gordon equations with critical magnetic potentials in \( \mathbb{R}^2 \), we direct the reader to \cite{FFFP1, FFFP, FZZ, GYZZ}.\vspace{0.2cm}

Strichartz inequalities are well-established for the constant coefficient Schr\"{o}dinger and wave equations on $\mathbb{R}^n$; see \cite{GV95, KT, LS, strichartz} and the references therein. For instance, the Strichartz inequalities for the Schr\"{o}dinger equation assert that for admissible pairs $(q, r)$, defined by
\begin{equation}
(q, r) \in \left\{(q, r) \in [2, \infty] \times [2, \infty] : \frac{2}{q} = n \left( \frac{1}{2} - \frac{1}{r} \right) \right\},
\end{equation}
the solution $u(t, x)$ satisfies the following estimate:
\begin{equation}
\|u(t, x)\|_{L^{q}_t(\mathbb{R}; L_x^r(\mathbb{R}^n))} \leq C \|u_0\|_{L^2(\mathbb{R}^n)},
\end{equation}
where $C$ is a universal constant. Strichartz estimates were first introduced in 1977 by R. Strichartz \cite{strichartz}, who established a type of  priori estimates. Since then, numerous works have been dedicated to this type of priori estimates, which are now commonly referred to as Strichartz estimates, particularly for solutions to the Schr\"{o}dinger and wave equations. These estimates are widely used to derive significant results in the well-posedness theory and nonlinear scattering theory for semilinear Schr\"{o}dinger and wave equations on Euclidean space. For further details, see \cite{GV85,Tbook}, and the references therein.\vspace{0.2cm}

It is natural to ask whether Strichartz inequalities hold for Schr\"{o}dinger operators with variable coefficients, especially in settings on manifolds and for operators with potentials \cite{Tbook}. Due to their important applications in nonlinear theory and partial differential equations, many mathematicians have conducted extensive research and achieved significant results. By developing a representation of the fundamental solution \cite{CT1, CT2}, Ford \cite{Ford} and Blair-Ford-Marzuola \cite{BFM} were pioneers in proving the Strichartz inequalities on the flat cone. However, this method cannot be applied to more general settings, such as metric cones or asymptotically conic manifolds. The Strichartz estimates for the Schr\"{o}dinger and wave equations in these general settings were established by Hassel, Zhang, and Zheng in their series of works \cite{HZ, Zhang, ZZ1, ZZ2}, and their results were subsequently applied to the proof of scattering theory for nonlinear equations. Moreover, the dispersive estimates on metric cones were resolved in the recent work \cite{JZ1} using a modified Hadamard parametrix. Additionally, Jia-Zhang \cite{JZ2} proved the dispersive and Strichartz estimates for Schr\"{o}dinger operators perturbed by critical electromagnetic potentials in dimensions $n \geq 3$. This case is more complicated than the $n = 2$ case due to the explicit eigenfunctions and eigenvalues in $\mathbb{R}^2$.\vspace{0.2cm}

Nevertheless, because of the perturbed critical potentials, the proof of dispersive and Strichartz estimates in \(\mathbb{R}^2\) is not straightforward and has taken several decades to develop. We will not cite all related works, but will only introduce those most relevant to the operators studied in the present paper. Strichartz and time decay estimates for the Schr\"{o}dinger equation were established by Fanelli, Felli, Fontelos, and Primo in \cite{FFFP, FFFP1}, utilizing pseudoconformal invariance. Later, Fanelli, Zhang, and Zheng \cite{FZZ} proved Strichartz estimates based on a Lipschitz-Hankel integral formula. More recently, Yin, Zhang, Zheng, and the first author \cite{GYZZ} established time decay and Strichartz estimates for Klein-Gordon equations via the construction of spectral measures. Following a similar approach to that used in analyzing Schr\"{o}dinger operators without potentials on the 2D flat Euclidean cone \(\Sigma\), Zhang \cite{Zhang1} derived dispersive estimates for Schr\"{o}dinger and wave equations, from which Strichartz estimates follow by Keel and Tao's abstract argument \cite{KT}. Based on these works, Yin and Zhang \cite{YZ} established dispersive and Strichartz inequalities for the Klein-Gordon equation on flat Euclidean cones. Motivated by the aforementioned papers and following the methods in \cite{Zhang1, GYZZ}, we aim to establish Strichartz estimates for Schr\"{o}dinger and wave equations on the 2D flat cone perturbed by critical magnetic potentials as described in \eqref{AB:po}.\vspace{0.2cm}

More precisely, we have the following results
\begin{theorem}\label{thm:stri0}Let $\mathcal{H}_{\bf A}$ be in \eqref{oper}, and let $u(t,x)$ a solution of Schr\"odinger equation
\begin{equation}\label{equ:S'}
\begin{cases}
i\partial_t u+\mathcal{H}_{{\A}} u=0,\qquad (t,x)\in\R\times \Sigma\\
u(0,x)=u_0(x),\quad x\in \Sigma,
\end{cases}
\end{equation}
 Then there exists a constant $C$ such that
\begin{equation}\label{stri-LA}
\|u(t,x)\|_{L^{q}_{t}(\R;L_x^{r}( \Sigma))}\leq C\|u_0\|_{L^2( \Sigma)},
\end{equation}
where
%$L^{r,2}$ denotes the Lorentz space and
 $(q,r)\in\Lambda_0^S$ defined in  below
\begin{equation}\label{adm}
(q,r)\in\Lambda^S_0:=\Big\{(q,r)\in[2,\infty]\times[2,\infty):\tfrac2q=2\big(\tfrac12-\tfrac1r\big)\Big\}.
\end{equation}
\end{theorem}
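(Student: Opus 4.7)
The overall strategy is to invoke the abstract Strichartz theorem of Keel--Tao \cite{KT}. Since the Friedrichs extension of $\mathcal{H}_{\mathbf{A}}$ is self-adjoint, the functional calculus immediately yields the $L^2$-conservation $\|e^{-it\mathcal{H}_{\mathbf{A}}} u_0\|_{L^2(\Sigma)} = \|u_0\|_{L^2(\Sigma)}$. Because the admissible range $\Lambda^S_0$ excludes $r=\infty$ (so the 2D forbidden endpoint $(2,\infty)$ is avoided), the estimate \eqref{stri-LA} then follows from the $TT^*$ argument once one establishes the pointwise-in-time dispersive bound
\begin{equation}\label{planned-disp}
\bigl\| e^{-it \mathcal{H}_{\mathbf{A}}} \bigr\|_{L^1(\Sigma) \to L^\infty(\Sigma)} \le C |t|^{-1}, \qquad t \ne 0.
\end{equation}

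To prove \eqref{planned-disp}, I would exploit the explicit Schwartz kernel of the spectral measure $dE_{\sqrt{\mathcal{H}_{\mathbf{A}}}}(\lambda)$ constructed earlier in the paper, writing
$$e^{-it\mathcal{H}_{\mathbf{A}}}(x,y)=\int_0^\infty e^{-it\lambda^2}\,dE_{\sqrt{\mathcal{H}_{\mathbf{A}}}}(\lambda)(x,y).$$
Separation of variables on the cone, combined with a gauge transformation that conjugates the angular operator $(i\nabla_\theta+\mathbf{A}(\theta))^2$ on $\mathbb{S}_\rho^1$ into a shifted flat Laplacian determined by the Aharonov--Bohm flux $\alpha$, produces angular eigenvalues of the form $\nu_k^2=(k+\alpha)^2/\rho^2$ for $k\in\mathbb{Z}$, and reduces the radial piece to a Hankel-type spectral expansion involving $J_{\nu_k}(\lambda r)J_{\nu_k}(\lambda r')$. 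The kernel in \eqref{planned-disp} then becomes a sum in $k$ of oscillatory integrals
$$\int_0^\infty e^{-it\lambda^2}\,J_{\nu_k}(\lambda r)\,J_{\nu_k}(\lambda r')\,\lambda\,d\lambda,$$
which I would estimate using Lipschitz--Hankel type identities and stationary-phase analysis of Bessel functions, along the lines of \cite{FZZ, Zhang1, GYZZ}. Once the pointwise $|t|^{-1}$ decay of the full kernel is in hand, Keel--Tao gives \eqref{stri-LA} throughout $\Lambda^S_0$.

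The main obstacle is obtaining the sharp $|t|^{-1}$ decay \emph{uniformly} in the angular index $k$. Since $\alpha$ is generically non-integer, the Bessel orders $\nu_k$ are real-valued and the integer-indexed Neumann/Graf addition identities no longer apply, so one cannot collapse the $k$-sum in closed form as in the unperturbed case \cite{Ford, BFM, Zhang1}. Controlling the oscillatory integral in the delicate transitional regime where $\nu_k$ and $\lambda \max(r,r')$ are comparable requires uniform (Debye--Olver) asymptotics, and the resulting bounds must be summed in $k$ without dissipating the sharp time decay. A frequency decomposition adapted to the three regimes $\lambda r \ll \nu_k$, $\lambda r \sim \nu_k$, $\lambda r \gg \nu_k$ (and similarly in $r'$), together with the magnetic-$\mathbb{R}^2$ techniques developed in \cite{FFFP, GYZZ}, is the mechanism I expect to close \eqref{planned-disp} and therefore \eqref{stri-LA}.
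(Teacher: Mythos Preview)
Your high-level reduction is the same as the paper's: invoke Keel--Tao \cite{KT}, use the unitarity of $e^{-it\mathcal{H}_{\mathbf{A}}}$ for the energy estimate, and reduce \eqref{stri-LA} to the $L^1\to L^\infty$ dispersive bound $|e^{-it\mathcal{H}_{\mathbf{A}}}(x,y)|\le C_\rho|t|^{-1}$. Where you diverge from the paper is in how you propose to obtain that dispersive bound.

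You plan to go through the spectral measure, write the propagator as $\int_0^\infty e^{-it\lambda^2}\,dE_{\sqrt{\mathcal{H}_{\mathbf{A}}}}(\lambda)$, and then attack the resulting $k$-indexed sum of oscillatory integrals $\int_0^\infty e^{-it\lambda^2}J_{\nu_k}(\lambda r)J_{\nu_k}(\lambda r')\,\lambda\,d\lambda$ by uniform Bessel (Debye--Olver) asymptotics across three regimes. You flag as the main difficulty that, because $\alpha$ is non-integer, the $k$-sum cannot be collapsed in closed form. The paper proceeds quite differently and sidesteps exactly this obstacle. Each radial integral is evaluated \emph{exactly} by Weber's second exponential integral, giving $\frac{1}{2it}e^{-(r^2+r'^2)/4it}I_{\nu_k}(rr'/2it)$. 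One then inserts the integral representation of $I_\nu$ and applies the Poisson summation (Dirac comb) identity $\sum_k \frac{1}{2\pi\rho}e^{ikx/\rho}=\sum_j\delta(x-2\pi\rho j)$ to the $k$-sum. This \emph{does} collapse the sum, even for non-integer $\alpha$: the output is a finite geometric sum over $j$ plus a single $s$-integral weighted by an explicit function $B_{\alpha,\rho}(s,\theta,\theta')$. The dispersive bound then follows from elementary estimates showing $\int_0^\infty|B_{\alpha,\rho}(s,\theta,\theta')|\,ds\le C_\rho$, with no stationary-phase or uniform-in-$k$ analysis required.

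Two further remarks. First, in the paper the spectral measure is constructed \emph{from} the Schr\"odinger kernel (via the resolvent and Stone's formula), not the other way around; so your appeal to a spectral measure ``constructed earlier in the paper'' reverses the actual logical order. Second, your proposed route is not wrong in principle, but the transitional-regime estimates you identify as the crux are genuinely delicate, and you have not indicated how to sum them in $k$ without loss; the paper's closed-form computation avoids that issue entirely and is considerably shorter.
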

To state the our another main result, we need to give the definition of distorted Besov spaces. Let $\phi\in C_0^\infty(\R\backslash\{0\}),$ with $0\leq\phi\leq1$, supp$\,\phi\subset[\frac12,1]$ and
\begin{equation}\label{def-phi}
\sum_{j\in\Z}\phi(2^{-j}\lambda)=1,\quad\phi_j(\lambda):=\phi(2^{-j}\lambda),\quad j\in\Z.
\end{equation}
\begin{definition}[Besov spaces associated with $\mathcal{H}_{{\A}}$]
For $s\in\R$ and $1\leq p,r\leq\infty$, the norm of $\|\cdot\|_{\dot{\mathcal{B}}_{p,r,{\bf A}}^s( \Sigma)}$ is given by
\begin{equation}
\|f\|_{\dot{\mathcal{B}}_{p,r,{\bf A}}^s( \Sigma)}=\Big(\sum_{j
\in\Z}2^{jsr}\|\phi_j(\sqrt{\mathcal{H}_{\bf A}})f\|_{L^p( \Sigma)}^r\Big)^{\frac1r}.
\end{equation}
In particular, for $p=r=2$, we have
\begin{equation}
\|f\|_{\dot{H}^s_{\bf A}( \Sigma)}:=\|\mathcal{H}_{\bf A}^{\frac s2}f\|_{L^2( \Sigma)}=\Big\|\Big(\sum_{j
\in\Z}2^{2js}|\phi_j(\sqrt{\mathcal{H}_{\bf A}})f|^2\Big)^{\frac12})\Big\|_{L^2( \Sigma)}
=\|f\|_{\dot{\mathcal{B}}_{2,2,{\bf A}}^s( \Sigma)}.
\end{equation}
\end{definition}
Our main results about the Strichartz estimates of the wave equation is the following.
\begin{theorem}\label{thm:stri-w}Let $\mathcal{H}_{\bf A}$ be in \eqref{oper}, and let $u(t,x)$ a solution of wave equation
\begin{equation}\label{equ:w'}
\begin{cases}
\partial_{tt} u+\mathcal{H}_{{\A}} u=0,\qquad (t,x)\in\R\times  \Sigma\\
u(0,x)=f(x),\qquad \partial_t u(0,x)= g(x),\qquad x\in  \Sigma.
\end{cases}
\end{equation}
Then there exists a constant $C$ such that
\begin{equation}\label{stri-w}
\|u(t,x)\|_{L^{q}_{t}(\R;L_x^r( \Sigma))}\leq C\Big(\|f\|_{\dot{\mathcal{H}}^s_{\bf A}(\Sigma)}+\|g\|_{\dot{\mathcal{H}}^{s-1}_{\bf A}( \Sigma)}\Big),
\end{equation}
where $s\geq0$ and
\begin{equation}\label{adm-w}
(q,r)\in\Lambda^w_s:=\Big\{(q,r)\in[2,\infty]\times[2,\infty):\tfrac2q\leq \big(\tfrac12-\tfrac1r\big), s=2\big(\tfrac12-\tfrac1r\big)-\tfrac1q\Big\}.
\end{equation}
\end{theorem}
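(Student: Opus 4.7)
The strategy I would adopt follows the spectral-measure approach of \cite{Zhang1, GYZZ}, adapted to accommodate the critical magnetic perturbation. By Euler's formula, the solution of \eqref{equ:w'} can be written in terms of the half-wave propagator $U(t):=e^{it\sqrt{\mathcal{H}_{\bf A}}}$ as
\[
u(t)=\tfrac12\bigl(U(t)+U(-t)\bigr)f+\tfrac{1}{2i}\,\mathcal{H}_{\bf A}^{-1/2}\bigl(U(t)-U(-t)\bigr)g,
\]
so it suffices to prove $\|U(t)f\|_{L^q_tL^r_x(\Sigma)}\lesssim\|f\|_{\dot{\mathcal{H}}^s_{\bf A}(\Sigma)}$ for $(q,r)\in\Lambda_s^w$; the analogous bound for $g$ then follows after replacing $s$ by $s-1$.

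Next I would insert the Littlewood--Paley decomposition $1=\sum_j\phi_j(\sqrt{\mathcal{H}_{\bf A}})$ and establish a frequency-localized dispersive estimate
\[
\bigl\|U(t)\phi_j(\sqrt{\mathcal{H}_{\bf A}})\bigr\|_{L^1(\Sigma)\to L^\infty(\Sigma)}\leq C\,2^{3j/2}|t|^{-1/2},\qquad t\neq 0.
\]
The kernel on the left equals
\[
\int_0^\infty e^{it\lambda}\phi_j(\lambda)\,dE_{\sqrt{\mathcal{H}_{\bf A}}}(\lambda)(x,y),
\]
and I would use the explicit Schwartz kernel of the spectral measure constructed earlier in the paper---an expansion in the angular eigenfunctions of $(i\nabla_\theta+{\bf A}(\theta))^2$ on $\mathbb{S}_\rho^1$, with radial factors given by shifted Bessel functions $J_{\nu_k}$---to cast the kernel as an oscillatory integral. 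After the rescaling $\lambda\mapsto 2^j\lambda$, a stationary phase analysis uniform in the shifted orders $\nu_k$ yields the claimed $2^{3j/2}|t|^{-1/2}$ bound. Combined with the trivial $L^2\to L^2$ bound $\|U(t)\phi_j(\sqrt{\mathcal{H}_{\bf A}})\|\leq 1$ and Keel--Tao's abstract $TT^*$ machinery \cite{KT}, followed by Riesz--Thorin interpolation with the energy bound to cover the non-sharp admissible range, this produces
\[
\bigl\|U(t)\phi_j(\sqrt{\mathcal{H}_{\bf A}})f\bigr\|_{L^q_tL^r_x}\lesssim 2^{js}\bigl\|\phi_j(\sqrt{\mathcal{H}_{\bf A}})f\bigr\|_{L^2}
\]
for every $(q,r)\in\Lambda_s^w$, with $s=2(\tfrac12-\tfrac1r)-\tfrac1q$ forced by scaling.

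To conclude, I would sum over the frequency pieces. Since $q,r\in[2,\infty)$, Minkowski's inequality together with the square-function characterization of $\dot{\mathcal{H}}^s_{\bf A}(\Sigma)$ gives
\[
\|U(t)f\|_{L^q_tL^r_x}\lesssim\Bigl(\sum_j\|U(t)\phi_jf\|_{L^q_tL^r_x}^2\Bigr)^{1/2}\lesssim\Bigl(\sum_j 2^{2js}\|\phi_jf\|_{L^2}^2\Bigr)^{1/2}\sim\|f\|_{\dot{\mathcal{H}}^s_{\bf A}(\Sigma)},
\]
which is \eqref{stri-w}.

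The main obstacle is the frequency-localized dispersive estimate. The cone tip contributes a diffractive component to the spectral measure that is not of simple pseudodifferential type, and the magnetic flux shifts the Bessel orders off the integer lattice $\mathbb{Z}$. Consequently, the stationary phase analysis of the half-wave kernel must be carried out jointly in the spatial variables $(r,\theta)$ and in the angular-momentum index $k$, with uniform control on the shifted orders $\nu_k$; this is where most of the technical work lies and where the construction of the Schwartz kernel advertised in the abstract does the heavy lifting. A secondary but manageable point is verifying the Littlewood--Paley square-function equivalence on $L^r(\Sigma)$ for the distorted frequency projections $\phi_j(\sqrt{\mathcal{H}_{\bf A}})$, which should follow from Gaussian heat-kernel bounds that persist under the magnetic perturbation.
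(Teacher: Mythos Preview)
Your overall architecture---half-wave reduction, frequency-localized dispersive estimate via the spectral measure, Keel--Tao, then Littlewood--Paley summation with the square-function inequality deduced from Gaussian heat-kernel bounds---matches the paper's proof exactly.

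The one substantive divergence is in how the dispersive estimate is obtained. You describe the spectral measure as an eigenfunction expansion in angular modes with radial Bessel factors $J_{\nu_k}$, and anticipate a stationary-phase argument carried out jointly in $(r,\theta)$ and the mode index $k$, uniform in the shifted orders $\nu_k$. The paper never works with that infinite series directly. Instead, already at the level of the Schr\"odinger kernel (Proposition~\ref{pro-SKEr}) the sum over $k$ is evaluated in closed form via the Dirac-comb/Poisson summation identity, yielding a finite geometric sum (over images $j$ with $|\theta-\theta'+2j\rho\pi|\le\pi$, weighted by the unimodular factor $A_{\alpha,\rho}$) plus a diffractive integral in an auxiliary variable $s$ with the explicit kernel $B_{\alpha,\rho}(s,\theta,\theta')$. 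This closed form is then propagated through the resolvent to the spectral measure \eqref{sp}. With the spectral measure in that shape, the dispersive estimate (Theorem~\ref{thm:dis-w}) reduces to a one-variable oscillatory integral in $\lambda$ with phase $e^{i(t\pm d)\lambda}$ and amplitude satisfying \eqref{est-a}; plain integration by parts in $\lambda$ suffices, and the diffractive piece is controlled by the $L^1_s$ bound \eqref{A-Bo} on $B_{\alpha,\rho}$ already established in Section~\ref{sec-s}. Thus the ``main obstacle'' you flag---uniform control over all Bessel orders---is bypassed entirely, and the magnetic flux $\alpha$ enters only through the explicit bounded factors $A_{\alpha,\rho}$ and $B_{\alpha,\rho}$.
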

\begin{remark} The Strichartz inequalities presented here include the results of Ford \cite{Ford} and Blair-Ford-Marzuola \cite{BFM}, and extend to Schr\"{o}dinger operators with critical magnetic potentials.
\end{remark}
\begin{remark}
In the future, we will use the results presented in our paper to address several nonlinear problems (such as well-posedness and scattering theory) associated with $\mathcal{H}_{\mathbf{A}}$ in this setting.
\end{remark}

Here, we provide an outline of the proof. As mentioned above, the main idea of this paper is to construct the Schwartz kernel of the resolvent and the spectral measure (see \eqref{sp} below) of the Schr\"{o}dinger operator $\mathcal{H}_{\mathbf{A}}$, perturbed by critical magnetic potentials as given in \eqref{AB:po}. 

The paper is organized as follows: In Section \ref{sec-s}, we prove Theorem \ref{thm:stri0} by constructing the Schr\"{o}dinger propagator $e^{it\mathcal{H}_{\bf A}}$, while Section \ref{sec-w} is devoted to proving Theorem \ref{thm:stri-w}, which relies on the representation of the spectral measure on $\Sigma$.

\section{Strichartz estimates for Schr\"odinger}\label{sec-s}

This section is devoted to showing Theorem \ref{thm:stri0}. Form the abstract method of Keel-Tao \cite{KT} and the energy estimates (obtained from the mass conservation law for Schr\"odinger equation or the unitary property of $e^{it\mathcal{H}_{\A}}$), it suffices to prove the following dispersive estimates.
\begin{theorem}[Dispersive for Schr\"odinger]\label{thm:Sch-dis} Let $\mathcal{H}_{\bf A}$ in \eqref{oper} be the Schr\"odinger operator with magnetic potentials on the flat cone $ \Sigma$, and let $x=(r,\theta)\in \Sigma$ and $y=(r^\prime,\theta^\prime)\in  \Sigma$. Then the dispersive estimate holds
\begin{equation}\label{dis-s}
|e^{-it\mathcal{H}_{\bf A}}(x,y)|\leq C_\rho |t|^{-1},\quad \forall t\in\R,
\end{equation}
where $C_\rho$ is a constant independent of $x,y$.
\end{theorem}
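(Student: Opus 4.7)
The plan is to construct the Schwartz kernel of $e^{-it\mathcal{H}_{\bf A}}$ via separation of variables and the Hankel-transform functional calculus, use a Weber--Schafheitlin type integral identity to extract a prefactor of $(2it)^{-1}$, and then bound the remaining angular series uniformly. This mirrors the approach of \cite{Ford, BFM, Zhang1, GYZZ}, adapted to accommodate the critical magnetic potential.

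First, I would diagonalize the angular operator $(i\partial_\theta + {\bf A}(\theta))^2$ on $\mathbb{S}^1_\rho$. Writing ${\bf A}(\theta) = \alpha + \Phi'(\theta)$ with $\alpha := (2\pi\rho)^{-1}\int_0^{2\pi\rho}{\bf A}\,d\theta$ the normalized flux and $\Phi$ the periodic mean-zero primitive, the gauge change $u \mapsto e^{i\Phi}u$ converts this operator into $(i\partial_\theta + \alpha)^2$, whose eigenpairs are $\mu_k^2 = (k/\rho + \alpha)^2$ with eigenfunctions $\varphi_k(\theta) = (2\pi\rho)^{-1/2}e^{-i\Phi(\theta)}e^{ik\theta/\rho}$, $k\in\Z$. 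Setting $\nu_k := |k/\rho + \alpha|$, each angular mode reduces $\mathcal{H}_{\bf A}$ to the radial Bessel operator $L_{\nu_k} := -\partial_r^2 - r^{-1}\partial_r + \nu_k^2 r^{-2}$, whose Friedrichs extension is diagonalized by the Hankel transform of order $\nu_k$. Combining these ingredients yields the spectral representation
\begin{equation*}
e^{-it\mathcal{H}_{\bf A}}(x,y) = \sum_{k\in\Z}\varphi_k(\theta)\overline{\varphi_k(\theta')}\int_0^\infty e^{-it\lambda^2}J_{\nu_k}(\lambda r)J_{\nu_k}(\lambda r')\,\lambda\, d\lambda.
\end{equation*}
The classical Weber--Schafheitlin integral then evaluates the radial integral in closed form as $(2it)^{-1}\exp\bigl(i(r^2+r'^2)/(4t)\bigr)J_{\nu_k}\bigl(rr'/(2t)\bigr)$, up to a unit-modulus phase $e^{-i\nu_k\pi/2}$, which immediately isolates the dispersive decay $|t|^{-1}$.

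The main obstacle is to prove a uniform bound $|K(s,\psi)| \leq C_\rho$ for
\begin{equation*}
K(s,\psi) := \sum_{k\in\Z} e^{-i\nu_k\pi/2}J_{\nu_k}(s)\,e^{ik\psi/\rho}, \qquad s=\tfrac{rr'}{2t},\; \psi = \theta-\theta',
\end{equation*}
independent of $s>0$ and $\psi\in\R$. I would establish this via a Sommerfeld / Cheeger--Taylor contour argument: split the sum by the sign of $k/\rho + \alpha$, represent each Bessel function through Schl\"afli's integral $J_\nu(s) = (2\pi i)^{-1}\int_{\mathcal{C}}e^{s\sinh z - \nu z}\,dz$, interchange summation and contour (justified by exponential decay along $\mathcal{C}$), and sum the resulting geometric series in $e^{k(\pm z/\rho + i\psi/\rho)}$ on the half-planes where it converges. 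This collapses $K$ into finitely many contour integrals whose integrands have the form $e^{s\sinh z}\cdot R(e^{z/\rho})\cdot e^{-\alpha z}$, where $R$ is rational with poles depending only on $\rho$ and $\psi$; crucially, the magnetic flux $\alpha$ enters only as a bounded exponential factor and creates no new singularities. Standard steepest-descent / non-stationary phase estimates on these integrals then produce the uniform bound $|K(s,\psi)|\leq C_\rho$, which combined with the $1/(2it)$ prefactor gives \eqref{dis-s}.
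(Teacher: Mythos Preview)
Your plan matches the paper's proof almost exactly: the paper performs the same separation of variables and Weber integral, then uses the integral representation
\[
I_\nu(z)=\frac1\pi\int_0^\pi e^{z\cos s}\cos(\nu s)\,ds-\frac{\sin(\nu\pi)}{\pi}\int_0^\infty e^{-z\cosh s-\nu s}\,ds,
\]
which is precisely your Schl\"afli contour collapsed onto the segment $[-i\pi,i\pi]$ and the rays $[0,\infty)\pm i\pi$; the first piece is summed over $k$ via the Dirac comb (Poisson summation) into a finite collection of unimodular free-propagator terms, and the second via the geometric series you describe into an explicit diffractive kernel $B_{\alpha,\rho}(s,\theta,\theta')$. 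The only point where the paper is lighter than your sketch is the final bound: no steepest descent or stationary phase is needed, since $\bigl|e^{-\frac{rr'}{2it}\cosh s}\bigr|=1$ and it suffices to show $\int_0^\infty|B_{\alpha,\rho}(s,\theta,\theta')|\,ds\le C_\rho$, which the paper does by elementary estimates on integrals of the form $\int_0^\infty e^{\pm\alpha s}\bigl(\cosh(s/\rho)-\cos\phi\bigr)^{-1}\{\sin\phi,\ \cos\phi-e^{-s/\rho}\}\,ds$, using $\cosh(s/\rho)-\cos\phi\gtrsim (s/\rho)^2+\sin^2(\phi/2)$ near $s=0$.
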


To obtain Theorem \ref{thm:Sch-dis}, the key point is the representation of Schr\"odinger kernel of $e^{it\mathcal{H}_{\bf A}}$, which can be constructed by following the arguments in \cite{GYZZ,YZ}. Indeed, one has
\begin{proposition}[Schr\"odinger kernel]\label{pro-SKEr}Let $x=(r,\theta)\in  \Sigma$ and $y=(r',\theta')\in  \Sigma$, and denote two constants
\begin{equation}\label{def-phi}
\phi_1:=\frac1\rho(\pi-(\theta-\theta')),\quad \phi_2:=\frac1\rho(-\pi-(\theta-\theta')).
\end{equation}
Let $\alpha$ be in \eqref{def-alph} below, and define \begin{equation}\label{def-A}
A_{\alpha,\rho}(\theta,\theta'):=e^{i\int_{\theta'}^\theta A(\theta^*)\, d\theta^*}e^{i\alpha\cdot 2j\rho\pi},\quad j\in\Z,
\end{equation}
and \begin{equation}\label{def-B}
\begin{split}
&B_{\alpha,\rho}(s,\theta,\theta'):=e^{-i\big( (\theta-\theta')\alpha-\int_{\theta'}^\theta A(\theta^*)\, d\theta^*\big)}\times\Big\{\sin(|\alpha|\pi)e^{-|\alpha|s}\\
&+\frac1{4i}e^{-s\alpha}\Big[\frac{\cos\phi_1-e^{-\frac s\rho}+i\sin\phi_1}{\cosh \frac s\rho-\cos\phi_1}e^{i\alpha\pi}-\frac{\cos\phi_2-e^{-\frac s\rho}-i\sin\phi_2}{\cosh \frac s\rho-\cos\phi_2}e^{-i\alpha\pi}\Big]\\
&-\frac1{4i}e^{s\alpha}\Big[\frac{\cos\phi_1-e^{-\frac s\rho}-i\sin\phi_1}{\cosh \frac s\rho-\cos\phi_1}e^{i\alpha\pi}-\frac{\cos\phi_2-e^{-\frac s\rho}+i\sin\phi_2}{\cosh \frac s\rho-\cos\phi_2}e^{-i\alpha\pi}\Big]\Big\}.
\end{split}
\end{equation}
Let $K(t,r,\theta,r',\theta')$ be the kernel of $e^{it\mathcal{H}_{{\A}}}$ with $\mathcal{H}_{{\A}}$ being in \eqref{oper}. Then there holds
\begin{equation}\label{pro-SK}
\begin{split}
K(t,r,\theta,r',\theta')
=&\frac1{4\pi}\frac{e^{-\frac{r^2+r'^2}{4it}} }{it}
\sum_{\{j\in\Z: 0\leq |\theta-\theta'+2j\rho\pi|\leq \pi\}} A_{\alpha,\rho}(\theta,\theta')e^{\frac{rr'}{2it}\cos(\theta-\theta'+2j\rho\pi)}\\
&-\frac{1}{4\pi^2\rho}\frac{e^{-\frac{r^2+r'^2}{4it}} }{it}
 \int_0^\infty e^{-\frac{rr'}{2it}\cosh s} B_{\alpha,\rho}(s,\theta,\theta') \, ds.
\end{split}
\end{equation}
\end{proposition}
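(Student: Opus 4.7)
My plan follows the separation-of-variables plus Sommerfeld--Carslaw contour scheme of \cite{Ford,CT1,CT2,GYZZ,YZ}. First, I would use the transversal gauge condition to conjugate the angular operator $(i\nabla_\theta+{\bf A}(\theta))^2$ on $\mathbb{S}_\rho^1$ by the phase $e^{i\int_{\theta'}^\theta A(\theta^*)\,d\theta^*}$, reducing it to a constant-coefficient operator whose normalized eigenfunctions are $(2\pi\rho)^{-1/2}e^{ik\theta/\rho}$, $k\in\mathbb{Z}$, with eigenvalues $(k/\rho+\alpha)^2$, where $\alpha$ denotes the flux appearing in \eqref{def-alph}. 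This diagonalizes $\mathcal{H}_{\bf A}$ into a family of one-dimensional Bessel--Schr\"odinger operators $-\partial_r^2-r^{-1}\partial_r+\nu_k^2/r^2$ of order $\nu_k=|k/\rho+\alpha|$, whose Friedrichs functional calculus is the Hankel transform of order $\nu_k$.

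Applying the classical Weber identity $\int_0^\infty e^{-it\lambda^2}J_\nu(r\lambda)J_\nu(r'\lambda)\lambda\,d\lambda=(2it)^{-1}e^{-(r^2+r'^2)/(4it)}I_\nu(rr'/(2it))$ to each mode and reassembling in $k$ yields the preliminary formula
\begin{equation*}
K(t,r,\theta,r',\theta')=\frac{e^{i\int_{\theta'}^\theta A(\theta^*)\,d\theta^*}}{4\pi\rho\,it}\,e^{-(r^2+r'^2)/(4it)}\sum_{k\in\mathbb{Z}}e^{ik(\theta-\theta')/\rho}\,I_{|k/\rho+\alpha|}\!\left(\tfrac{rr'}{2it}\right).
\end{equation*}
The task is then to recast this mode sum as the two pieces of \eqref{pro-SK}. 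I would substitute the Schl\"afli representation $I_\nu(z)=\frac{1}{2\pi i}\int_\Gamma e^{z\cosh\zeta-\nu\zeta}\,d\zeta$, interchange sum and integral, and split the $k$-sum according to the sign of $k/\rho+\alpha$ so as to remove the absolute value; each half-sum is then a convergent geometric series summing to an explicit rational function of $e^{\pm\zeta/\rho}$ and $e^{i(\theta-\theta')/\rho}$.

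The decisive step is the Sommerfeld contour deformation. The rational factor from the geometric sums has simple poles at $\zeta=i(\theta-\theta'+2j\rho\pi)$ for $j\in\mathbb{Z}$; pushing $\Gamma$ to the standard Sommerfeld contour $\zeta=s\pm i\pi$ collects residues precisely at those poles inside the strip $|\operatorname{Im}\zeta|<\pi$, i.e.\ for integers $j$ with $|\theta-\theta'+2j\rho\pi|\leq\pi$. These residues carry the monodromy phases $e^{i\alpha\cdot 2j\rho\pi}$ inherited from the shift in $\nu_k$, and they assemble into the geometric-optics sum in \eqref{pro-SK} with kernel $A_{\alpha,\rho}$. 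The remaining integral along $\zeta=s\pm i\pi$ is the diffractive term; combining the four pieces arising from the two sign-sectors of $|k/\rho+\alpha|$ together with the two sides of the contour, and recognizing Mittag--Leffler partial fractions for $\csc$, produces exactly $B_{\alpha,\rho}(s,\theta,\theta')$. The main obstacle is this final algebraic reduction: eight candidate terms must be collapsed into the four rational kernels in $B_{\alpha,\rho}$, and one must verify that the boundary contribution $\sin(|\alpha|\pi)e^{-|\alpha|s}$ --- originating from the discontinuity of $|k/\rho+\alpha|$ at $k/\rho=-\alpha$ --- together with the correct $e^{\pm i\alpha\pi}$ phases, emerges from this bookkeeping.
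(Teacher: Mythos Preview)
Your strategy is sound and would produce \eqref{pro-SK}, but the paper avoids the Schl\"afli contour and Sommerfeld deformation altogether. After the Weber identity the paper substitutes the real integral representation
\[
I_\nu(z)=\frac1\pi\int_0^\pi e^{z\cos s}\cos(\nu s)\,ds-\frac{\sin(\nu\pi)}{\pi}\int_0^\infty e^{-z\cosh s}e^{-s\nu}\,ds,
\]
which already separates each mode into a geometric piece on $[0,\pi]$ and a diffractive tail on $[0,\infty)$. For the first piece the $k$-sum $\sum_k e^{-ik(\theta-\theta')/\rho}\cos(\nu_k s)$ is collapsed by the Dirac-comb identity $\sum_k(2\pi\rho)^{-1}e^{ikx/\rho}=\sum_j\delta(x-2\pi\rho j)$ into delta functions at $s=|\theta-\theta'+2j\rho\pi|$, yielding the finite $A_{\alpha,\rho}$-sum directly with no residues to collect. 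For the second piece the paper simply splits into $k\ge1$, $k=0$, $k\le-1$ and evaluates each half via $\sum_{k\ge1}e^{ikz}=e^{iz}/(1-e^{iz})$; the $k=0$ summand is exactly the $\sin(|\alpha|\pi)e^{-|\alpha|s}$ term, and the four rational fractions of $B_{\alpha,\rho}$ fall out without any Mittag--Leffler step. Your contour route is equivalent in principle---the real representation above is itself the Schl\"afli integral collapsed onto $i[0,\pi]\cup([0,\infty)\pm i\pi)$---but it defers the geometric/diffractive split until after the $k$-summation and so incurs the eight-term residue bookkeeping you flag as the main obstacle; the paper's ordering sidesteps that entirely, while yours makes the monodromy phases $e^{i\alpha\cdot 2j\rho\pi}$ visible as genuine residues and may adapt more transparently to other cross-sections.
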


\begin{remark} If $\rho=1$ and $\alpha=0$, then $B_{\alpha,\rho}(s,\theta,\theta')$ vanishes. The first term becomes
$$(4\pi it)^{-1}e^{-\frac{|x-y|^2}{4it}}, $$
which consists with the kernel of Schr\"odinger propagator in Euclidean space $\R^2$.
\end{remark}
\begin{remark} Let $\alpha$ denote the total flux along the $\mathbb{S}_\rho^1$
\begin{equation}\label{def-alph}\alpha=\frac1{2\pi\rho}\int_0^{2\pi\rho}A(\theta)\, d\theta.
\end{equation}
Due to the unitarity equivalence of magnetic Schr\"{o}dinger operators for $\alpha$ and $\alpha + 1$, without loss of generality, we assume that $\alpha \in \left( -\frac{1}{\rho}, \frac{1}{\rho} \right)\setminus\{0\}$.
\end{remark}

\subsection{The proof of Proposition \ref{pro-SKEr}} We first write the positive Laplacian $\mathcal{H}_{\bf A}$ perturbed by magnetic potentials on $\Sigma$ as follows
\begin{equation}\label{oper-1}
\mathcal{H}_{\bf A}=-\partial_r^2-\frac1r\partial_r+\frac{{L_{\bf A}}}{r^2},
\end{equation}
where $L_{\bf A}$ is the operator on $\mathbb{S}_\rho^1$ given by
$ L_{\bf A}=(i\nabla_\theta+A(\theta))^2$.
Let $\nu_k=|\frac{k}{\rho}+\alpha|$ and
\begin{equation}
\varphi_k(\theta)=\frac1{\sqrt{2\pi\rho}}e^{-i\big(\theta(\frac k\rho+\alpha)-\int_0^{\theta}A(\theta^*)\, d  \theta^*\big )},\qquad k\in\Z,
\end{equation}
then $\nu_k^2$ and $\varphi_k$ are eigenvalues and eigenfunctions of operator $L_{\bf A}$ such that
\begin{equation}
L_{\bf A}\varphi_k(\theta)=\nu_k^2\varphi_k(\theta).
\end{equation}
Utilizing Cheeger's separation of variables functional calculus (see  \cite[(8.47)]{Taylor} for more details), we can write the kernel of the operator $e^{-it\mathcal{H}_{\bf A}}$ as
\begin{equation}\label{equ:kernsina}
\begin{split}
K(t,x,y)&=K(t,r,\theta,r',\theta')=\sum_{k\in\Z}\varphi_{k}(\theta)\overline{\varphi_{k}(\theta')}K_{\nu_k}(t,r,r')
\end{split}
\end{equation}
where $\varphi_k(\theta)\overline{\varphi_{k}(\theta')}=\frac1{2\pi\rho}e^{-i\big( (\theta-\theta')(\frac k\rho+\alpha)-\int_{\theta'}^\theta A(\theta^*)\, d\theta^*\big)},$ and $K_{\nu}(t,r,r')$ is given by
\begin{equation}\label{equ:knukdef}
  K_{\nu}(t,r,r')=\int_0^\infty e^{-it\rho^2}J_{\nu}(r\rho)J_{\nu}(r'\rho) \,\rho\, d\rho
  =\lim_{\epsilon\searrow 0} \int_0^\infty e^{-(\epsilon+it)\rho^2}J_{\nu}(r\rho)J_{\nu}(r'\rho) \,\rho\, d\rho.
\end{equation}
However, the Weber's second exponential integral \cite[Section 13.31 (1)]{Watson} shows
\begin{equation}
\begin{split}
 \int_0^\infty  e^{-(\epsilon+it)\rho^2}  J_{\nu}(r\rho) J_{\nu}(r'\rho)\rho\,  d\rho =\frac{e^{-\frac{r^2+r'^2}{4(\epsilon+it)}}}{2(\epsilon+it)} I_\nu\big(\frac{rr'}{2(\epsilon+it)}\big),\quad \epsilon>0,
\end{split}
\end{equation}
where $I_\nu(x)$ is the modified Bessel function of the first kind. Define $z:=\frac{rr'}{2(\epsilon+it)}$, then we use the integral representation in \cite{Watson} to write
\begin{equation}\label{for-Be}
I_\nu(z)=\frac1{\pi}\int_0^\pi e^{z\cos(s)} \cos(\nu s) ds-\frac{\sin(\nu\pi)}{\pi}\int_0^\infty e^{-z\cosh s} e^{-s\nu} ds.
\end{equation}
Hence, combing \eqref{equ:kernsina}-\eqref{for-Be} together, we need to consider
\begin{equation}\label{ied-d}
\frac1{\pi}\frac{e^{-\frac{r^2+r'^2}{4(\epsilon+it)}}}{2(\epsilon+it)}\sum_{k\in\Z}\frac1{2\pi\rho}e^{-i\big( (\theta-\theta')(\frac k\rho+\alpha)-\int_{\theta'}^\theta A(\theta^*)\, d\theta^*\big)}\int_0^\pi e^{z\cos(s)} \cos(\nu s) ds
\end{equation}
and
\begin{equation}\label{ide-e}
\frac1{\pi}\frac{e^{-\frac{r^2+r'^2}{4(\epsilon+it)}}}{2(\epsilon+it)}\sum_{k\in\Z}\frac1{2\pi\rho}e^{-i\big( (\theta-\theta')(\frac k\rho+\alpha)-\int_{\theta'}^\theta A(\theta^*)\, d\theta^*\big)}\sin(\nu\pi)\int_0^\infty e^{-z\cosh s} e^{-s\nu} ds.
\end{equation}
We first consider the term \eqref{ied-d}. Based on the formula about the relation between the Dirac comb distribution and its Fourier series
\begin{equation*}
\sum_{j\in\Z} \delta(x-2\pi \rho j)=\sum_{k\in\Z} \frac1{2\pi \rho} e^{i \frac{k}{\rho}x},
\end{equation*}
and recall that $\nu_k=|\frac k\rho+\alpha|,\ k\in\Z$, we directly obtain
\begin{align*}
&\sum_{k\in\Z}\frac1{2\pi\rho}e^{-i\frac k\rho (\theta-\theta')}\cos(\nu s)\\
=&
\frac1{2\pi\rho}\sum_{k\in\Z}\frac{e^{i(\frac k\rho+\alpha)s}+e^{-i(\frac k\rho+\alpha)s}}2e^{-i\frac k\rho (\theta-\theta')}\\
=&\frac12\sum_{j\in\Z}\big[e^{i\alpha s}\delta(s-(\theta-\theta')-2j\pi \rho)
 +e^{-i\alpha s}\delta(-s-(\theta-\theta')-2j\pi\rho)\big],
\end{align*}
Therefore, we further have
\begin{align*}
&\frac1{\pi}\sum_{k\in\Z}\frac1{2\pi\rho}e^{-i\frac k\rho (\theta-\theta')}\int_0^\pi e^{z\cos(s)} \cos(\nu s) ds\\
=&\frac1{2\pi}\sum_{j\in\Z}\int_0^\pi\big[e^{i\alpha s}\delta(s-(\theta-\theta')-2j\pi \rho)
 +e^{-i\alpha s}\delta(-s-(\theta-\theta')-2j\pi\rho)\big]e^{z\cos(s)}ds\\
=&\frac1{2\pi}\sum_{\{j\in\Z: 0\leq |\theta-\theta'+2j\rho\pi|\leq \pi\}} e^{i\alpha(\theta-\theta'+2j\rho\pi)}e^{z\cos(\theta-\theta'+2j\rho\pi)}.
\end{align*}
Consequently, recall the definition of $A_{\alpha,\rho}$ in above, the contribution of \eqref{ied-d} is
\begin{equation}\label{I-g}
\frac{e^{-\frac{r^2+r'^2}{4(\epsilon+it)}} }{4\pi(\epsilon+it)}
\sum_{\{j\in\Z: 0\leq |\theta-\theta'+2j\rho\pi|\leq \pi\}}A_{\alpha,\rho}(\theta,\theta') e^{\frac{rr'}{2(\epsilon+it)}\cos(\theta-\theta'+2j\rho\pi)}.
\end{equation}

We next consider the other term \eqref{ide-e}. From $\nu_k=|\frac k\rho+\alpha|$, and $\alpha\in(-\frac1\rho,\frac1\rho)\backslash\{0\}$ with $\rho\geq1$, we conclude that
\begin{align*}
\nu_k=
\begin{cases}
\frac k\rho+\alpha,&  k\geq1,\\
|\alpha|,& k=0,\\
-(\frac k\rho+\alpha),& k\leq-1.
\end{cases}
\end{align*}
Hence we have
\begin{align*}
\sin(\pi\nu_k)=\sin\Big(\Big|\frac k\rho+\alpha\Big|\pi\Big)=
\begin{cases}
\sin\Big(\frac k\rho+\alpha\Big),&  k\geq1,\\
\sin(|\alpha|\pi),& k=0,\\
-\sin\Big(\frac k\rho+\alpha\Big),& k\leq-1.
\end{cases}
\end{align*}
Therefore we further get

\begin{align*}
&\sum_{k\in\Z}e^{-i\frac k\rho (\theta-\theta')}\sin\Big(|\nu_k|\pi\Big)e^{-s|\nu_k|}\\
=&\sum_{k\geq 1} \frac{e^{i(\frac{k}{\rho}+\alpha)\pi}-e^{-i(\frac{k}{\rho}+\alpha)\pi}}{2i} e^{-(\frac{k}{\rho}+\alpha)s}e^{-i\frac k{\rho}(\theta-\theta')}+\sin(|\alpha|\pi)e^{-|\alpha|s}\\
&-\sum_{k\leq -1} \frac{e^{i(\frac{k}{\rho}+\alpha)\pi}-e^{-i(\frac{k}{\rho}+\alpha)\pi}}{2i} e^{(\frac{k}{\rho}+\alpha)s}e^{-i\frac k{\rho}(\theta-\theta')}\\
=&\sin(|\alpha|\pi)e^{-|\alpha|s}+\frac1{2i}e^{-s\alpha}\sum_{k\geq 1} \Big(e^{i\alpha\pi}e^{i\frac{k}{\rho}(\pi-(\theta-\theta')+si)}-e^{-i\alpha\pi}e^{i\frac{k}{\rho}(-\pi-(\theta-\theta')+si)}\Big)\\
&-\frac1{2i}e^{s\alpha}\sum_{k\geq 1} \Big(e^{i\alpha\pi}e^{i\frac{k}{\rho}(-\pi+(\theta-\theta')+si)}-e^{-i\alpha\pi}e^{i\frac{k}{\rho}(\pi+(\theta-\theta')+si)}\Big).\\
\end{align*}

%For the sake of simplicity in notation, we define
%\begin{equation}\label{def-phi}
%\phi_1=\frac1\rho(\pi-(\theta-\theta')),\quad \phi_2=\frac1\rho(-\pi-(\theta-\theta')).
%\end{equation}
Notice that
\begin{equation}
\sum_{k=1}^\infty e^{ikz}=\frac{e^{iz}}{1-e^{iz}},\qquad \mathrm{Im} z>0.
\end{equation}
Keep in mind $\phi_1$ in \eqref{def-phi}, we can easily obtain that

\begin{equation*}
\sum_{k\geq1}e^{i\frac{k}{\rho}(\pi-(\theta-\theta')+si)}=\frac{e^{i\phi_1-\frac{s}{\rho}}}{1-e^{i\phi_1-\frac{s}{\rho}}}=\frac{\cos\phi_1-e^{-\frac s\rho}+i\sin\phi_1}{2(\cosh \frac s\rho-\cos\phi_1)}.
\end{equation*}

Using the same argument in above, the other terms are similarly computed. We finally obtain the contribution of \eqref{ide-e}
\begin{equation}\label{I-d}
\begin{split}
&-\frac{1}{4\pi^2\rho}\frac{e^{-\frac{r^2+r'^2}{4(\epsilon+it)}} }{(\epsilon+it)}
 \int_0^\infty e^{-\frac{rr'}{2(\epsilon+it)}\cosh s} B_{\alpha,\rho}(s,\theta,\theta') ds,
\end{split}
\end{equation}
where $B_{\alpha,\rho}(s,\theta,\theta')$ is in \eqref{def-B}. Since $\alpha\in(-\frac1\rho,\frac1\rho)\backslash\{0\}$ with $\rho\geq1$, which means the integral is converging, see \eqref{A-Bo} in below for details. Hence after taking $\epsilon\searrow0$, we obtain \eqref{pro-SK}.

\subsection{The proof of Theorem \ref{thm:Sch-dis}}  For given $\rho>0$, the summation of $j$ in the first term of \eqref{pro-SK} is finite and bounded by $O(1+\frac1{\rho})$. Thus it is easy to see
\begin{equation*}
\Big|\frac1{4\pi}\frac{e^{-\frac{r^2+r'^2}{4it}} }{it}
\sum_{\{j\in\Z: 0\leq |\theta-\theta'+2j\rho\pi|\leq \pi\}}A_{\alpha,\rho}(\theta,\theta') e^{\frac{rr'}{2it}\cos(\theta-\theta'+2j\rho\pi)}\Big|\leq C_\rho |t|^{-1}.
\end{equation*}
Next we estimate the second term in \eqref{pro-SK}. To show \eqref{dis-s}, it suffices to prove
\begin{equation}\label{A-Bo}
\begin{split}
\int_0^\infty|B_{\alpha,\rho}(s,\theta,\theta')|\;ds\leq C_\rho
\end{split},\quad \forall\alpha\in\Big(-\frac1\rho,\frac1\rho\Big)\backslash\{0\}.
\end{equation}
Note that $B_{\alpha,\rho}$ in \eqref{def-B}, it further reduced to show $\forall\alpha\in\Big(-\frac1\rho,\frac1\rho\Big)\backslash\{0\}$, there holds
\begin{align}\label{0-infty}
 &\int_0^\infty  \Big|\sin(|\alpha|s)e^{-|\alpha|s}\Big| ds\lesssim 1,\\\label{1-infty}
&\int_0^\infty\Big|\frac{e^{\pm\alpha s}\sin\phi }{\cosh\frac s\rho-\cos\phi}\Big|\;ds\lesssim_\rho 1,\\
\label{2-infty}
&\int_0^\infty\Big|\frac{e^{\pm\alpha s}(\cos\phi-e^{-\frac s\rho}) }{\cosh\frac s\rho-\cos\phi}\Big|\;ds\lesssim_\rho 1,
\end{align}
where $\phi$ denotes $\phi_1$ or $\phi_2$ for the convenience of writing.
 \vspace{0.2cm}

It is easy to check \eqref{0-infty}. Next we aim to estimate \eqref{1-infty} and \eqref{2-infty}. Note that
\begin{equation}
\cosh\frac s\rho-\cos\phi=\sinh^2\frac s{2\rho}+\sin^2\frac\phi2
\end{equation}
\emph{Estimate of \eqref{0-infty}:} Since $\alpha\in\big(-\frac1\rho,\frac1\rho\big)\backslash\{0\}$ and $\rho\geq1$ imply $\frac1\rho\pm\alpha>0$, then
\begin{align*}
&\int_0^\infty\Big|\frac{e^{\pm\alpha s}\sin\phi }{\cosh\frac s\rho-\cos\phi}\Big|\;ds\\
\lesssim&\int_0^1\Big|\frac{\sin\phi }{\sinh^2\frac s{2\rho}+\sin^2\frac\phi2}\Big|\;ds+\int_1^\infty\Big|\frac{e^{\pm\alpha s} }{\sinh^2\frac s{2\rho}+\sin^2\frac\phi2}\Big|\;ds\\
\lesssim&\int_0^1\Big|\frac{\sin\frac\phi2 }{(\frac s{\rho})^2+\sin^2\frac\phi2}\Big|\;ds+\int_1^\infty e^{-(\frac1\rho\pm\alpha) s} \;ds\\
\leq& C_\rho.
\end{align*}
\emph{Estimate of \eqref{1-infty}:} Using the fact $\frac1\rho\pm\alpha>0$ again, we get
\begin{align*}
&\int_0^\infty\Big|\frac{e^{\pm\alpha s}(\cos\phi-e^{-\frac s\rho}) }{\cosh\frac s\rho-\cos\phi}\Big|\;ds\\
\lesssim&\int_0^1\Big|\frac{(\cos\phi-1+1- e^{-\frac s\rho})e^{\pm\alpha s}}{\sinh^2\frac s{2\rho}+\sin^2\frac\phi2}\Big|\;ds+\int_1^\infty\Big|\frac{(\cos\phi-e^{-\frac s\rho})e^{\pm\alpha s}}{\sinh^2\frac s{2\rho}+\sin^2\frac\phi2}\Big|\;ds\\
\lesssim&\int_0^1\Big|\frac{s(\frac s\rho+\frac{\phi^2}2) }{(\frac s{\rho})^2+(\frac\phi2)^2}\Big|\;ds+\int_1^\infty e^{-(\frac1\rho\pm\alpha) s} \;ds\\
\leq & C_\rho.
\end{align*}
Here we finish the proof of Theorem \ref{thm:Sch-dis}, ans so Theorem \ref{thm:stri0} follows.
\vspace{0.2cm}

\section{Strichartz estimates for wave}\label{sec-w} In this section, we make use of the method in \cite{GYZZ,Zhang} to construct the spectral measure kernel and the Keel-Tao's argument in \cite{KT} to give the proof of the Theorem \ref{thm:stri-w}.

\subsection{Spectral measure kernel} The purpose of this section is to  construct the spectral measure kernel associated with $\mathcal{H}_{\bf A}$. Following the argument in \cite{GYZZ,Zhang}, the first step is to use the Schr\"odinger kernel in Theorem \ref{thm:stri-w} to obtain the resolvent kernel, which will be used to prove spectral measure according to Stone's formula.
\vspace{0.1cm}

Notice that
when $z\in \{z\in\C: \Im(z)>0\}$, there holds
$$(s-z)^{-1}=i\int_0^\infty e^{-ist} e^{iz t}dt,\quad \forall s\in\R,$$
thus we obtain, for $z=\lambda^2+i\epsilon$ with $\epsilon>0$
\begin{equation}\label{res+}
\begin{split}
(\mathcal{H}_{\bf A}-(\lambda^2+i0))^{-1}&=i\lim_{\epsilon\to 0^+}\int_0^\infty e^{-it\mathcal{H}_{\bf A}} e^{it(\lambda^2+i\epsilon)}dt.
\end{split}
\end{equation}
Recall the Schr\"odinger kernel in \eqref{pro-SK}, we need to consider
\begin{equation}\label{res-g}
i\lim_{\epsilon\rightarrow0^+}\int_0^\infty\frac1{4\pi }\sum_{\{j\in\Z: 0\leq |\theta-\theta'+2j\rho\pi|\leq \pi\}} A_{\alpha,\rho}(\theta,\theta')
\frac{e^{-\frac{r^2+r'^2}{4it}} }{it}
e^{\frac{rr'}{2it}\cos(\theta-\theta'+2j\rho\pi)}e^{it(\lambda^2+\epsilon)}\, dt.
\end{equation}
and
\begin{equation}\label{res-d}
\begin{split}
&-i\lim_{\epsilon\rightarrow0^+}\int_0^\infty\int_0^\infty\frac{1}{4\pi^2\rho}\frac{e^{-\frac{r^2+r'^2}{4it}} }{it}e^{-\frac{rr'}{2it}\cosh s}B_{\alpha,\rho}(s,\theta,\theta') \, ds \, e^{it(\lambda^2+\epsilon)}\, dt
.
\end{split}
\end{equation}
Note that for $\xi=(\xi_1,\xi_2)\in\R^2$
\begin{equation}\label{for-ee}
\int_{\R^2}e^{-ix\xi}e^{-it|\xi|^2}\,d \xi=\frac\pi{it}e^{-\frac{|x|^2}{4it}}.
\end{equation}
This implies
\begin{equation}\label{for-n1}
\frac{1 }{it}e^{-\frac{r^2+r'^2}{4it}}
 e^{\frac{rr'}{2it}\cos(\theta-\theta'+2j\rho\pi)}=\frac{1 }{it}e^{-\frac{|{\bf m}|^2}{4it}}=\frac1\pi\int_{\R^2}e^{-i{\bf m}\xi}e^{-it|\xi|^2}\,d \xi,
\end{equation}
and similarly
\begin{equation}\label{for-n2}
\frac{1}{it}e^{-\frac{r^2+r'^2}{4it}}
 e^{-\frac{rr'}{2it}\cosh s}=\frac1\pi\int_{\R^2}e^{-i{\bf  n}\xi}e^{-it|\xi|^2}\,d \xi,
\end{equation}
where \begin{equation}\label{bf-m}
{\bf m}=(r-r', \sqrt{2rr'(1-\cos(\theta-\theta'+2j\rho\pi))}),
\end{equation}
and
\begin{equation}\label{bf-n}
{\bf  n}=\big(r+r', \sqrt{2rr'(\cosh s-1)}\big).
\end{equation}
For $z=\lambda^2+i\epsilon$ with $\epsilon>0$, we use \eqref{for-n1}, \eqref{for-n2}, then \eqref{for-ee} to have
\begin{equation*}
\begin{split}
&\int_0^\infty \frac{e^{-\frac{r^2+r'^2}{4it}} }{it} e^{\frac{rr'}{2it} \cos(\theta-\theta'+2j\rho\pi)} e^{iz t}dt\\
&=\frac 1 \pi\int_{\R^2} e^{-i{\bf m}\cdot\xi}\int_0^\infty e^{-it|\xi|^2}e^{iz t}
\, dt\, d\xi=\frac {-i} \pi\int_{\R^2} \frac{e^{-i{\bf m}\cdot\xi}}{|\xi|^2-z} \, d\xi
\end{split}
\end{equation*}
and
\begin{equation*}
\begin{split}
\int_0^\infty \frac{e^{-\frac{r^2+r'^2}{4it}} }{it} e^{-\frac{rr'}{2it}\cosh s} e^{iz t}dt
=\frac {-i}\pi\int_{\R^2} \frac{e^{-i{\bf  n}\cdot\xi}}{|\xi|^2-z} d\xi.
\end{split}
\end{equation*}
Plugging these into \eqref{res-g} and \eqref{res-d} respectively, and then taking $\epsilon\to 0^+$, we finally obtain the incoming resolvent kernel $(\mathcal{H}_{\bf A}-(\lambda^2+i0))^{-1}$ written as
\begin{equation}\label{equ:res+}
\begin{split}
&(\mathcal{H}_{\bf A}-(\lambda^2+i0))^{-1}\\&=
\frac1{4\pi^2}
\sum_{\{j\in\Z: 0\leq |\theta-\theta'+2j\rho\pi|\leq \pi\}}A_{\alpha,\rho}(\theta,\theta')\int_{\R^2} \frac{e^{-i{\bf m}\cdot {\xi}}}{|\xi|^2-(\lambda^2+i0)} \, d{ \xi}\\&
-\frac{1}{4\pi^3\rho }
 \int_0^\infty \int_{\R^2} \frac{e^{-i{\bf n}\cdot {\xi}}}{|\xi|^2-(\lambda^2+i0)} \, d{ \xi} \,B_{\alpha,\rho}(s,\theta,\theta') \, ds.
 \end{split}
\end{equation}\vspace{0.1cm}

For our goal to obtain the spectral measure, we also need the outgoing resolvent kernel $(\mathcal{H}_{\bf A}-(\lambda^2-i0))^{-1}.$ Actually, one has
\begin{equation}\label{equ:res-}
\begin{split}
&(\mathcal{H}_{\bf A}-(\lambda^2-i0))^{-1}\\&=
\frac1{4\pi^2}
\sum_{\{j\in\Z: 0\leq |\theta-\theta'+2j\rho\pi|\leq \pi\}}A_{\alpha,\rho}(\theta,\theta') \int_{\R^2} \frac{e^{-i{\bf m}\cdot {\xi}}}{|\xi|^2-(\lambda^2-i0)} \, d{ \xi}\\&
-\frac{1}{4\pi^3\rho }
 \int_0^\infty \int_{\R^2} \frac{e^{-i{\bf n}\cdot {\xi}}}{|\xi|^2-(\lambda^2-i0)} \, d{ \xi} \,B_{\alpha,\rho}(s,\theta,\theta') \, ds.
 \end{split}
\end{equation}
This can be easily done from \eqref{equ:res+}. Indeed, from \eqref{def-A} and \eqref{def-B}, we see that $\overline{A}_{-\alpha,\rho}=A_{\alpha,\rho}$ and $\overline{B}_{-\alpha,\rho}=B_{\alpha,\rho}$ , which directly implies
 \begin{equation*}
\begin{split}
(\mathcal{H}_{\bf A}-(\lambda^2-i0))^{-1}=\overline{(\overline{\mathcal{H}_{\bf A}}-(\lambda^2+i0))^{-1}}.
\end{split}
\end{equation*}

In the following text, we will make use of \eqref{equ:res+} and \eqref{equ:res-} to construct the spectral measure. According to Stone's formula, the spectral measure is related to the resolvent
 \begin{equation}\label{stone}
 dE_{\sqrt{\mathcal{H}_{\bf A}}}(\lambda)=\frac{d}{d\lambda}dE_{\sqrt{\mathcal{H}_{\bf A}}}(\lambda)\,d\lambda=\frac{\lambda}{\pi i}\big(R(\lambda+i0)-R(\lambda-i0)\big)\, d\lambda
 \end{equation}
 where the resolvent $$R(\lambda\pm i0)=\lim_{\epsilon\searrow 0}(\mathcal{H}_{\bf A}-(\lambda^2\pm i\epsilon))^{-1}.$$
Apply \eqref{equ:res+} and \eqref{equ:res-} to \eqref{stone}, it follows that
 \begin{equation*}
 \begin{split}
 &dE_{\sqrt{\mathcal{H}_{\bf A}}}(\lambda;x,y)\\
& =
\frac1{4\pi} \frac{\lambda}{\pi^2 i}
\sum_{\{j\in\Z: 0\leq |\theta-\theta'+2j\rho\pi|\leq \pi\}}A_{\alpha,\rho}(\theta,\theta') \int_{\R^2} e^{-i{\bf m}\cdot {\xi}}\Big(\frac1{|\xi|^2-(\lambda^2+i0)}-\frac1{|\xi|^2-(\lambda^2-i0)}\Big) \, d{ \xi}
 \\&
-\frac{1}{4\pi^2 \rho} \frac{\lambda}{\pi^2 i}
 \int_0^\infty \int_{\R^2} e^{-i{\bf n}\cdot {\xi}}\Big(\frac1{|\xi|^2-(\lambda^2+i0)}-\frac1{|\xi|^2-(\lambda^2-i0)}\Big) \, d{ \xi}
B_{\alpha,\rho}(s,\theta,\theta') ds.
\end{split}
\end{equation*}
On the one hand, we note that
\begin{equation*}\label{id-spect}
\begin{split}
&\lim_{\epsilon\to 0^+}\frac{\lambda}{\pi i}\int_{\R^2} e^{-ix\cdot\xi}\Big(\frac{1}{|\xi|^2-(\lambda^2+i\epsilon)}-\frac{1}{|\xi|^2-(\lambda^2-i\epsilon)}\Big) d\xi\\
&=\lim_{\epsilon\to 0^+} \frac{\lambda}{\pi }\int_{\R^2} e^{-ix\cdot\xi}2\Im\Big(\frac{1}{|\xi|^2-(\lambda^2+i\epsilon)}\Big)d\xi\\
&=\lim_{\epsilon\to 0^+} \frac{\lambda}{\pi }\int_{0}^\infty \frac{2\epsilon}{(\rho^2-\lambda^2)^2+\epsilon^2} \int_{|\omega|=1} e^{-i\rho x\cdot\omega} d\rho_\omega  \, \rho d\rho\\
&= \lambda \int_{|\omega|=1} e^{-i\lambda x\cdot\omega} d\rho_\omega  \\
\end{split}
\end{equation*}
where we use the fact
 the  Poisson kernel is is an approximation to the identity which implies that, for any reasonable function $m(x)$
\begin{equation*}
\begin{split}
m(x)&=\lim_{\epsilon\to 0^+}\frac1\pi \int_{\R} \Im\big(\frac{1}{x-(y+i\epsilon)}\big) m(y)dy
\\&=\lim_{\epsilon\to 0^+}\frac1\pi \int_{\R} \frac{\epsilon}{(x-y)^2+\epsilon^2} m(y)dy.
\end{split}
\end{equation*}
On the other hand, for example \cite[Theorem 1.2.1]{sogge}, we also observe that
\begin{equation*}
\begin{split}
\int_{\mathbb{S}^{1}} e^{-i x\cdot\omega} d\rho_\omega=\sum_{\pm}  a_\pm(|x|) e^{\pm i|x|}
\end{split}
\end{equation*}
where
\begin{equation}\label{est-a}
\begin{split}
| \partial_r^k a_\pm(r)|\leq C_k(1+r)^{-\frac{1}2-k},\quad k\geq 0.
\end{split}
\end{equation}
Therefore we obtain that
 \begin{equation}\label{sp}
 \begin{split}
 dE_{\sqrt{\mathcal{H}_{\bf A}}}(\lambda;x,y)& =
\frac{\lambda}{4\pi^2} \sum_{\pm}\Big(
\sum_{\{j\in\Z: 0\leq |\theta-\theta'+2j\rho\pi|\leq \pi\}}  A_{\alpha,\rho}(\theta,\theta')a_\pm(\lambda |{\bf{m}}|)e^{\pm i\lambda |{\bf m}|}
 \\&
-\frac{1}{ 4\pi^3 \rho}\int_0^\infty a_\pm(\lambda |{\bf{ n}}|)e^{\pm i\lambda |{\bf  n}|}
B_{\alpha,\rho}(s,\theta,\theta') ds\Big),
\end{split}
\end{equation}
where $|{\bf m}|=\sqrt{r^2+r'^2-2rr'\cos(\theta-\theta'+2j\rho\pi)}$, $|{\bf n}|=\sqrt{r^2+r'^2+2rr'\cosh s}$, and
$A_{\alpha,\rho}(\theta,\theta')$ and $B_{\alpha,\rho}(s,\theta,\theta')$ is in \eqref{def-A} and \eqref{def-B} respectively

\subsection{Strichartz estimates for Wave} In this subsection, we aim to prove Theorem \ref{thm:stri-w}. The main ingredients are the Littlewood-Paley square function inequality associated with the operator $\mathcal{H}_{\A}$ and the following dispersive estimates of $e^{it\sqrt{\mathcal{H}_{A}}}.$
\begin{theorem}[Dispersive estimates for wave]\label{thm:dis-w} Let $\mathcal{H}_{A}$ be the magnetic operator on $\Sigma$ and
let $\phi\in C_c^\infty([1/2, 2])$ be in \eqref{def-phi}. Assume $f=\phi(2^{-k}\sqrt{\mathcal{H}_{A}})f$ with $k\in \Z$, then there exists a constant $C$ independent of $t$ and $k\in \Z$ such that
\begin{equation}\label{dispersive}
\begin{split}
\|e^{it\sqrt{\mathcal{L}_{A}}} f\|_{L^\infty(\Sigma)}\leq C 2^{\frac32 k}(2^{-k}+|t|)^{-\frac12}\|f\|_{L^1(\Sigma)}.
\end{split}
\end{equation}
\end{theorem}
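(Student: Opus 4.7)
The plan is to exploit the spectral representation \eqref{sp} constructed above, together with Stone's formula and the standard $L^1$--$L^\infty$ duality for integral kernels, to reduce everything to a scalar one-dimensional oscillatory integral in the spectral variable $\lambda$ and then bound it by stationary-phase methods. Since $\|e^{it\sqrt{\mathcal{H}_{\A}}}f\|_{L^\infty}\le \|K_k\|_{L^\infty_{x,y}}\|f\|_{L^1}$ for
$$K_k(t,x,y) := \int_0^\infty e^{it\lambda}\phi(2^{-k}\lambda)\, dE_{\sqrt{\mathcal{H}_{\A}}}(\lambda;x,y)\, d\lambda,$$
the goal reduces to the pointwise estimate $|K_k(t,x,y)|\leq C\, 2^{3k/2}(2^{-k}+|t|)^{-1/2}$, uniformly in $x,y\in\Sigma$.

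\textbf{Reduction to a scalar integral.} First I would substitute \eqref{sp}, which splits $K_k$ into (i) a finite sum over the geodesic images indexed by $j$, weighted by $A_{\alpha,\rho}(\theta,\theta')$, and (ii) a $(0,\infty)$-integral in $s$, weighted by $B_{\alpha,\rho}(s,\theta,\theta')$. Both pieces reduce to the scalar oscillatory integral
$$I_{\pm}(t,d;k):=\int_0^\infty e^{i(t\pm d)\lambda}\phi(2^{-k}\lambda)\,a_\pm(\lambda d)\,\lambda\,d\lambda,\qquad d\ge 0,$$
with $d=|\mathbf{m}_j|$ or $d=|\mathbf{n}(s)|$ respectively. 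The key step will be to establish a bound on $I_\pm$ that is \emph{uniform} in $d\ge 0$, namely
$$\sup_{d\ge 0}\bigl|I_\pm(t,d;k)\bigr|\le C\, 2^{3k/2}(2^{-k}+|t|)^{-1/2}.$$

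\textbf{Stationary-phase bound on $I_\pm$.} Rescaling $\mu=2^{-k}\lambda$ gives $I_\pm=2^{2k}\int\phi(\mu)a_\pm(2^kd\mu)\mu\, e^{i(t\pm d)2^k\mu}\,d\mu$. By the chain rule and \eqref{est-a}, each $\mu$-derivative of the amplitude on $\mathrm{supp}\,\phi$ is controlled by $C_N(1+2^kd)^{-1/2}$: the factor $(2^kd)^N$ produced by differentiation is absorbed by the extra $(1+2^kd)^{-N}$ decay of $a_\pm^{(N)}$. Integration by parts $N$ times against the phase then yields
$$|I_\pm(t,d;k)|\le C_N\,2^{2k}(1+2^kd)^{-1/2}(1+2^k|t\pm d|)^{-N}.$$
Taking the supremum over $d$ by splitting into the regime $d\le |t|/2$ (where $(1+2^k|t\pm d|)^{-1}\lesssim (2^k|t|)^{-1}$ dominates and one uses $N=1$) and the regime $d\ge |t|/2$ (where $(1+2^kd)^{-1/2}\lesssim (2^k|t|)^{-1/2}$ dominates) gives the desired uniform bound when $|t|\ge 2^{-k}$, while for $|t|<2^{-k}$ the trivial estimate $|I_\pm|\le C\,2^{2k}$ already suffices.

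\textbf{Assembling $K_k$ and main obstacle.} For the finite sum in (i) there are $O_\rho(1)$ terms each with $|A_{\alpha,\rho}|=1$, so the uniform bound on $I_\pm$ applies termwise. For the continuous piece (ii), the $d$-uniform bound factors out of the $s$-integral, leaving $\int_0^\infty|B_{\alpha,\rho}(s,\theta,\theta')|\,ds$, which is at most $C_\rho$ by the estimate \eqref{A-Bo} already proved in Section \ref{sec-s}. Adding the two contributions yields \eqref{dispersive}. The delicate step is precisely the $d$-\emph{uniform} bound on $I_\pm$: because $|\mathbf{n}(s)|\sim e^{s/2}$ grows exponentially in $s$, any residual factor of $d$ in the estimate would destroy the $s$-integral. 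Balancing the amplitude decay $(1+2^kd)^{-1/2}$ coming from $a_\pm$ against the phase decay $(1+2^k|t\pm d|)^{-N}$ coming from integration by parts, so that the final bound depends only on $|t|$ and $k$, is the technical core; once this is in place the previously-established integrability of $B_{\alpha,\rho}$ dispatches the magnetic contribution cleanly.
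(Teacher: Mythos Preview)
Your proposal is correct and follows essentially the same route as the paper's proof: both reduce to the pointwise kernel bound via the spectral-measure formula \eqref{sp}, split into the finite geometric sum and the $s$-integral, bound the common scalar oscillatory integral $I_\pm$ by integration by parts using the symbol estimate \eqref{est-a}, and then invoke the $L^1$-bound \eqref{A-Bo} on $B_{\alpha,\rho}$ for the diffracted piece. Your case split $d\le |t|/2$ versus $d\ge |t|/2$ is a minor (and arguably cleaner) variant of the paper's dichotomy $|t|\sim d$ versus $|t\pm d|\gtrsim |t|$ followed by a geometric-mean interpolation, but the underlying estimate $|I_\pm|\le C_N 2^{2k}(1+2^kd)^{-1/2}(1+2^k|t\pm d|)^{-N}$ is identical.
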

\begin{proof} We use spectral measure kernel \eqref{sp} and stationary phase argument to obtain the dispersive estimates \eqref{dispersive}. We begin with writting
 \begin{equation*}
e^{it\sqrt{\mathcal{H}_{A}}}f=\int_\Sigma \int_0^\infty e^{it\lambda} \phi(2^{-k}\lambda) dE_{\sqrt{\mathcal{H}_{A}}}(\lambda; x, y) f(y) dy.
  \end{equation*}
  Then it suffices to show kernel estimate
  \begin{equation}\label{est:k}
\Big|  \int_0^\infty e^{it\lambda} \phi(2^{-k}\lambda) dE_{\sqrt{\mathcal{H}_{\bf A}}}(\lambda; x, y) \Big|\leq C 2^{\frac32 k}(2^{-k}+|t|)^{-\frac12}.
\end{equation}
Recall the fact that the summation of $j$ in the first term is finite, and so for given $\rho>0$, one has
$$\Big|\sum_{\{j\in\Z: 0\leq |\theta-\theta'+2j\rho\pi|\leq \pi\}}  A_{\alpha,\rho}(\theta,\theta')\Big|\lesssim O(1+\frac1{\rho}).$$
Thus to finish \eqref{dispersive}, from spectral measure \eqref{sp}, we aim to estimate
  \begin{equation}\label{est:k1}
\Big| \int_0^\infty e^{it\lambda} \phi(2^{-k}\lambda)\lambda a_\pm(\lambda |{\bf m}|)e^{\pm i\lambda |{\bf m}|} d\lambda\Big|\leq C 2^{\frac32 k}(2^{-k}+|t|)^{-\frac12},
\end{equation}
and
  \begin{equation}\label{est:k2}
  \begin{split}
\Big|  \int_0^\infty e^{it\lambda} \phi(2^{-k}\lambda)\lambda \int_0^\infty a_\pm(\lambda |{\bf n}|)e^{\pm i\lambda |{\bf n}|}
&B_{\alpha,\rho}(s,\theta,\theta') ds \,d\lambda\Big|\\&\leq C 2^{\frac32 k}(2^{-k}+|t|)^{-\frac12},
\end{split}
\end{equation}
where $a_\pm$ satisfies \eqref{est-a},
and ${\bf{m}}, {\bf{n}}$ are in \eqref{bf-m} and \eqref{bf-n}, and $B_{\alpha,\rho}(s,\theta,\theta')$ is in \eqref{def-B}.
For the convience of notations, let $d=|{\bf m}|$ or $|{\bf n}|$. Since $a_\pm$ satisfies \eqref{est-a}, we then conclude
\begin{equation}\label{bean'}
 |\partial_\lambda^N [a_\pm(\lambda d)]|\leq C_N \lambda^{-N}(1+\lambda d)^{-\frac{1}2},\quad N\geq 0.
\end{equation}
We first prove \eqref{est:k1}. By \eqref{bean'}, we use the $N$-times integration by parts to obtain
\begin{equation*}
\begin{split}
&\Big|  \int_0^\infty e^{it\lambda} \phi(2^{-k}\lambda)\lambda a_\pm(\lambda d)e^{\pm i\lambda d} d\lambda\Big|\\&\leq \Big|\int_0^\infty \left(\frac1{
i(t\pm d)}\frac\partial{\partial\lambda}\right)^{N}\big(e^{i(t\pm d)\lambda}\big)
\phi(2^{-k}\lambda)\lambda a_\pm(\lambda d) d\lambda\Big|\\& \leq
C_N|t\pm d|^{-N}\int_{2^{k-1}}^{2^{k+1}}\lambda^{1-N}(1+\lambda
d)^{-1/2}d\lambda\\&\leq
C_N2^{k(2-N)}|t\pm d|^{-N}(1+2^kd)^{-1/2}.
\end{split}
\end{equation*}
It follows that
\begin{equation}\label{dispersive2}
\begin{split}
&\Big|  \int_0^\infty e^{it\lambda} \phi(2^{-k}\lambda)\lambda a_\pm(\lambda d)e^{\pm i\lambda d} d\lambda\Big|\\&\leq
C_N2^{2k}\big(1+2^k|t\pm d|\big)^{-N}(1+2^k d)^{-1/2}.
\end{split}
\end{equation}
If $|t|\sim d$, we see \eqref{est:k1}.
Otherwise, we have $|t\pm d|\geq c|t|$ for some small constant
$c$, choose $N=1$ and $N=0$,  and then use geometric mean argument to prove \eqref{est:k1}.\vspace{0.2cm}

We next prove \eqref{est:k2}. We follow the same lines to obtain
  \begin{equation}\label{est-k2}
  \begin{split}
\Big|  \int_0^\infty e^{it\lambda} \phi(2^{-k}\lambda)\lambda &\int_0^\infty a_\pm(\lambda d)e^{\pm i\lambda |d|}
B_{\alpha,\rho}(s,\theta,\theta') ds \,d\lambda\Big|\\&\leq C 2^{\frac32 k}(2^{-k}+|t|)^{-\frac12} \int_0^\infty |B_{\alpha,\rho}(s,\theta,\theta')| ds.
\end{split}
\end{equation}
Since \eqref{A-Bo} holds, which implies \eqref{est:k2}, hence \eqref{dispersive}.
\end{proof}

As a consequence of Theorem \ref{thm:dis-w}, we directly have
\begin{proposition}\label{prop-w}
Let $U_k(t)=e^{it\sqrt{\mathcal{H}_{A}}}\phi(2^{-k}\sqrt{\mathcal{H}_{A}})$ be defined by
\begin{equation*}
U_k(t)f:=\int_\Sigma \int_0^\infty e^{it\lambda} \phi(2^{-k}\lambda) dE_{\sqrt{\mathcal{H}_{A}}}(\lambda; x, y) f(y) dy.
  \end{equation*}
Then there exists a constant $C$ independent of $t,x,y$ for all $k\in\Z$ such that
\begin{equation}\label{est-lo}
\|U_k(t)U_k^*(s) f\|_{L^\infty(\Sigma)}\leq C 2^{\frac32 k}(2^{-k}+|t-s|)^{-\frac12}\|f\|_{L^1(\Sigma)}.
\end{equation}
\end{proposition}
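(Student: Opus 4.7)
The plan is to deduce \eqref{est-lo} as a standard $TT^*$ reformulation of the dispersive estimate proved in Theorem \ref{thm:dis-w}. The starting point is that, since $\mathcal{H}_{\mathbf{A}}$ is self-adjoint (via its Friedrichs extension) and $\phi$ is real-valued by \eqref{def-phi}, the spectral multiplier $\phi(2^{-k}\sqrt{\mathcal{H}_{\mathbf{A}}})$ is self-adjoint; hence $U_k^*(s) = \phi(2^{-k}\sqrt{\mathcal{H}_{\mathbf{A}}})\, e^{-is\sqrt{\mathcal{H}_{\mathbf{A}}}}$. Since bounded Borel functions of $\mathcal{H}_{\mathbf{A}}$ commute, this gives $U_k(t) U_k^*(s) = e^{i(t-s)\sqrt{\mathcal{H}_{\mathbf{A}}}} \, \phi^2(2^{-k}\sqrt{\mathcal{H}_{\mathbf{A}}})$.

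Using the functional calculus identity $F(\sqrt{\mathcal{H}_{\mathbf{A}}}) G(\sqrt{\mathcal{H}_{\mathbf{A}}}) = (FG)(\sqrt{\mathcal{H}_{\mathbf{A}}})$, the Schwartz kernel of $U_k(t) U_k^*(s)$ is expressible directly in terms of the spectral measure kernel constructed in \eqref{sp}, namely
\begin{equation*}
[U_k(t) U_k^*(s)](x, y) = \int_0^\infty e^{i(t-s)\lambda} \phi^2(2^{-k}\lambda) \, dE_{\sqrt{\mathcal{H}_{\mathbf{A}}}}(\lambda; x, y).
\end{equation*}
Crucially, $\phi^2$ again belongs to $C_c^\infty([1/2, 2])$ whenever $\phi$ does, so this kernel has exactly the same structure as the one handled in the proof of Theorem \ref{thm:dis-w}, with $\phi$ replaced by $\phi^2$. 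The integration-by-parts / stationary phase argument producing \eqref{est:k1} and \eqref{est:k2} therefore applies verbatim (the symbol bounds \eqref{bean'} on $a_\pm$ and the absolute convergence \eqref{A-Bo} of the $B_{\alpha,\rho}$ integral are unaffected), yielding the pointwise kernel bound
\begin{equation*}
\bigl|[U_k(t) U_k^*(s)](x, y)\bigr| \leq C \, 2^{\frac{3}{2}k} (2^{-k} + |t-s|)^{-\frac{1}{2}}
\end{equation*}
uniformly in $x, y \in \Sigma$, $k \in \Z$, and $t, s \in \R$.

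The proposition then follows by the elementary Schur-type bound that an integral operator whose Schwartz kernel has sup-norm at most $M$ maps $L^1(\Sigma)$ into $L^\infty(\Sigma)$ with operator norm at most $M$. I expect no substantial obstacle here: the analytic heavy lifting, namely the construction of the spectral measure \eqref{sp}, the absolute integrability \eqref{A-Bo} of $B_{\alpha,\rho}$, and the stationary phase estimate in the dispersive bound, has already been carried out earlier in the paper. The present proposition is therefore essentially a bookkeeping step that packages Theorem \ref{thm:dis-w} in the $TT^*$ form required to feed into the Keel--Tao abstract machinery, from which the frequency-localized Strichartz estimates, and then Theorem \ref{thm:stri-w} via Littlewood--Paley square-function summation, will follow.
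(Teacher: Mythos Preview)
Your proposal is correct and matches the paper's approach: the paper simply declares the proposition to be a direct consequence of Theorem~\ref{thm:dis-w}, and what you have written is precisely the standard unpacking of that claim (computing $U_k(t)U_k^*(s)=e^{i(t-s)\sqrt{\mathcal{H}_{\mathbf{A}}}}\phi^2(2^{-k}\sqrt{\mathcal{H}_{\mathbf{A}}})$ and reusing the kernel bound \eqref{est:k} with $\phi$ replaced by $\phi^2$). Your observation that $\phi^2$ has the same support and smoothness properties as $\phi$, so that the stationary-phase argument carries over unchanged, is exactly the point.
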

\vspace{0.2cm}
As above stated, to prove Strichartz estimates \eqref{stri-w}, we need to rebuild the Littlewood-Paley square function inequality with $\mathcal{H}_{\bf A}$ on the flat cone.
\begin{proposition}[LP square function inequality]\label{prop:squarefun} Let $\{\phi_k\}_{k\in\mathbb Z}$ be in \eqref{def-phi} and let $\mathcal{H}_{\A}$ be given in \eqref{oper}.
Then for $1<p<\infty$,
there exist constants $c_p$ and $C_p$ depending on $p$ such that
\begin{equation}\label{square}
c_p\|f\|_{L^p(\Sigma)}\leq
\Big\|\Big(\sum_{k\in\Z}|\phi_k(\sqrt{\LL_{{\A}}})f|^2\Big)^{\frac12}\Big\|_{L^p(\Sigma)}\leq
C_p\|f\|_{L^p(\Sigma)}.
\end{equation}
\end{proposition}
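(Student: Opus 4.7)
The plan is to follow the standard randomization strategy: combine Khintchine's inequality with a Mikhlin--H\"{o}rmander spectral multiplier theorem adapted to $\mathcal{H}_{\bf A}$ on the flat cone $\Sigma$. By Khintchine's inequality, the two-sided bound \eqref{square} is equivalent, up to duality, to the uniform multiplier estimate
\begin{equation*}
\Big\| \sum_{k\in\Z} \epsilon_k \phi_k(\sqrt{\mathcal{H}_{\bf A}}) f \Big\|_{L^p(\Sigma)} \leq C_p \|f\|_{L^p(\Sigma)}, \qquad \epsilon_k \in \{-1,+1\},
\end{equation*}
with constant independent of the sign sequence $\epsilon$. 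The symbol $m_\epsilon(\lambda) = \sum_k \epsilon_k \phi(2^{-k}\lambda)$ satisfies H\"{o}rmander-type derivative bounds $\sup_{\lambda>0} \lambda^N |m_\epsilon^{(N)}(\lambda)| \leq C_N$ uniformly in $\epsilon$, because the supports of the $\phi_k$ have finite overlap.

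Thus the matter reduces to proving a Mikhlin--H\"{o}rmander spectral multiplier theorem for $\mathcal{H}_{\bf A}$: any $m \in C^\infty(0,\infty)$ with a sufficient number of scale-invariant derivative bounds defines an operator $m(\sqrt{\mathcal{H}_{\bf A}})$ bounded on $L^p(\Sigma)$ for every $1 < p < \infty$. By the general framework of Duong--Ouhabaz--Sikora (and Alexopoulos), such a theorem follows from two ingredients: (i) the doubling property of the metric measure space $(\Sigma, g)$, which is classical and holds because $\Sigma$ is locally Euclidean away from the tip and the volume scales as $r \,dr\,d\theta$; and (ii) a Gaussian-type upper bound on the heat semigroup
\begin{equation*}
|e^{-t\mathcal{H}_{\bf A}}(x,y)| \leq \frac{C}{t}\exp\!\Big(-c\,\frac{d_\Sigma(x,y)^2}{t}\Big), \qquad t>0,\ x,y\in\Sigma.
\end{equation*}

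To prove (ii), I would mimic the construction of Proposition \ref{pro-SKEr} with $it$ replaced by $t > 0$. Following the same separation-of-variables and Weber integral computation, one obtains a two-term representation of $e^{-t\mathcal{H}_{\bf A}}(x,y)$ in which the finite sum over $j$ gives a finite collection of shifted Euclidean Gaussians on $\Sigma$, while the $B_{\alpha,\rho}$-integral picks up genuine exponential decay in $\cosh s$ through $e^{-rr'\cosh s/2t}$. The bounds \eqref{A-Bo}--\eqref{2-infty}, whose proofs only used $\tfrac{1}{\rho} \pm \alpha > 0$ and are insensitive to the phase $it\mapsto t$, then transport verbatim and furnish the required Gaussian upper bound uniformly in $\alpha \in (-\tfrac{1}{\rho}, \tfrac{1}{\rho}) \setminus \{0\}$. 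Alternatively one could try to invoke the diamagnetic inequality to dominate by the unperturbed conic heat kernel, whose Gaussian bounds are available from \cite{Zhang1}, but the singular nature of the transversal Aharonov--Bohm term makes the direct kernel computation the safer route.

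The main obstacle is verifying the Gaussian heat-kernel bound uniformly across the conic tip and in the full flux range. Once this is established the rest is routine: the Duong--Ouhabaz--Sikora machinery yields a weak-$(1,1)$ bound for $m(\sqrt{\mathcal{H}_{\bf A}})$ via vector-valued Calder\'{o}n--Zygmund decomposition, interpolation with the trivial $L^2$ bound gives $L^p$ boundedness for $1<p\leq 2$, and duality extends the range to $2\leq p<\infty$. Applying this to $m_\epsilon$ and averaging over signs via Khintchine recovers the upper bound in \eqref{square}; the lower bound follows from the upper bound applied at the dual exponent $p'$ together with the spectral resolution of the identity on $L^2(\Sigma)$.
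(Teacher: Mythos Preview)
Your proposal is correct and follows essentially the same route as the paper: establish a Gaussian upper bound for the heat kernel of $\mathcal{H}_{\bf A}$, deduce a Mikhlin--H\"{o}rmander spectral multiplier theorem (the paper cites Alexopoulos \cite{A04} and \cite{KMMZZ18}, you cite Duong--Ouhabaz--Sikora and Alexopoulos), and then recover \eqref{square} by the standard randomization/Khintchine argument, which is exactly what the paper means by ``Stein's classical argument.'' The only cosmetic difference is that the paper obtains the Gaussian bound by invoking Li \cite{Li00} for the unperturbed conic Laplacian and adapting his argument, whereas you propose to compute the heat kernel directly by replacing $it$ with $t$ in the derivation of Proposition~\ref{pro-SKEr}; both lead to the same estimate \eqref{est-ker}.
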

\begin{proof}
The result depends on the Gaussian boundedness of the heat kernel of $\mathcal{H}_{\A}$,
\begin{equation}\label{est-ker}
|e^{-t\mathcal{H}_{\A}}(x,y)|\lesssim\frac1te^{-\frac{|x-y|^2}{4t}},\quad t>0.
\end{equation}
Indeed, the inequality can be easily obtained following from the argument in \cite{Li00}, in which Li has proved \eqref{est-ker} for the Laplace operator $\Delta_g$ on our setting.

The kernel estimate \eqref{est-ker} implies, in a standard way (see Alexopoulos \cite[Theorem 6.1]{A04} or \cite{KMMZZ18}) that
\begin{equation}\label{M-B}
m(\sqrt{\mathcal{H}_{\A}}):L^p(\Sigma)\rightarrow L^p(\Sigma),\quad1<p<\infty,
\end{equation}
where $m\in C^N(X)$ satisfies the weaker Mikhlin-type condition for $N\geq2$
\begin{equation*}
\sup_{0\leq k\leq N}\sup_{\lambda\in\R}|(\lambda\partial_\lambda)^km(\lambda)|<\infty.
\end{equation*}
Based on \eqref{M-B}, we can get \eqref{square} by utilizing Stein's \cite{Stein70} classical argument.
\end{proof}
\vspace{0.2cm}

We now turn to show our mian Theorem \ref{thm:stri-w}. We start by introducing the Keel-Tao's theorem.
\begin{theorem}[Keel and Tao \cite{KT}]\label{KT}
Let $(X,\mathcal{M},\mu)$ be a $\gamma-$finite
mesure space and let $U:\mathbb{R}\rightarrow B(L^2(X,\mathcal{M},\mu))$ be a weakly measurable map satisfying, for some constant $C,\beta\geq0$, $\gamma,h>0$,
\begin{equation}\label{est-w}
\begin{split}
\|U(t)\|_{L^2\rightarrow L^2}\leq C,\quad t\in\R,\\
\|U(t)U^*(s)\|_{L^\infty}\leq Ch^{-\beta}(h+|t-s|)^{-\gamma}\|f\|_{L^1}.
\end{split}
\end{equation}
Then for every pair $(q,r)\in[1,\infty]$ such that $(q,r,\gamma)\ne(2,\infty,1)$ and
\begin{equation}
\frac 1q+\frac \gamma r\leq\frac\gamma 2,\quad q\geq2,
\end{equation}
there exists a constant $\widetilde C$ only depending on $C,\gamma, q$ and $r$ such that
\begin{equation}
\Big(\int_\R\|U(t)u_0\|_{L^r}^q\Big)^{\frac1q}\leq\widetilde C\Lambda(h)\|u_0\|_{L^2},
\end{equation}
where $\Lambda(h)=h^{-(\beta+\gamma)(\frac12-\frac1r)+\frac1q}.$
\end{theorem}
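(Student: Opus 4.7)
This is the semiclassical (frequency-localized) form of the abstract Strichartz theorem. The natural plan is to first reduce to the classical ($h=1$) Keel-Tao setting by rescaling time, and then invoke the standard $TT^*$ machinery; the endpoint $q=2$ will be the main obstacle.

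\emph{Rescaling to absorb $h$.} Define $V(t) := U(ht)$. The hypotheses translate to $\|V(t)\|_{L^2 \to L^2} \leq C$ and
\begin{equation*}
\|V(t)V^*(s)\|_{L^1 \to L^\infty} \leq C h^{-\beta}(h+h|t-s|)^{-\gamma} = C h^{-\beta-\gamma}(1+|t-s|)^{-\gamma}.
\end{equation*}
If the classical ($h=1$) Keel-Tao theorem is applied to $V$, one obtains $\|V(t)u_0\|_{L^q_t L^r_x} \lesssim h^{-(\beta+\gamma)(1/2-1/r)} \|u_0\|_{L^2}$, with the exponent coming from the interpolation weight between the $L^2$ and $L^1\to L^\infty$ bounds. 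Undoing the substitution $\tau = ht$ in the time norm gives $\|V(t)u_0\|_{L^q_t L^r_x} = h^{-1/q}\|U(t)u_0\|_{L^q_t L^r_x}$, which rearranges to produce precisely the claimed factor $\Lambda(h) = h^{-(\beta+\gamma)(1/2-1/r)+1/q}$.

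\emph{Classical case via $TT^*$.} For the unparametrized result, I would dualize to the bilinear form
\begin{equation*}
T(F,G) := \iint \langle U(t)U^*(s) F(s),\, G(t)\rangle \, ds\, dt,
\end{equation*}
so that the Strichartz bound is equivalent to $|T(F,G)| \lesssim \|F\|_{L^{q'}_t L^{r'}_x} \|G\|_{L^{q'}_t L^{r'}_x}$. Riesz-Thorin interpolation between the $L^2\to L^2$ and $L^1\to L^\infty$ bounds gives $\|U(t)U^*(s)\|_{L^{r'} \to L^r} \lesssim (1+|t-s|)^{-2\gamma(1/2-1/r)}$, and in the strict non-endpoint regime $1/q > \gamma(1/2-1/r)$ one closes the bilinear estimate by Hölder in $x$ followed by the Hardy-Littlewood-Sobolev inequality in $t$.

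\emph{The endpoint is the main obstacle.} The sharp case $1/q = \gamma(1/2-1/r)$ with $q=2$ (which forces $\gamma > 1$ and finite $r$) is borderline for Hardy-Littlewood-Sobolev and requires the delicate argument that is the real content of Keel-Tao. The plan is to Whitney-decompose the bilinear form according to $|t-s|\sim 2^k$, bound each dyadic piece by interpolating between two non-endpoint bilinear estimates flanking the endpoint, and then sum via bilinear real interpolation (an atomic-decomposition argument) to recover $q=2$ without logarithmic loss. The excluded triple $(q,r,\gamma) = (2,\infty,1)$ is exactly the configuration where this Whitney-summation scheme breaks down, consistent with the known counterexample to the endpoint estimate for the 2D wave equation.
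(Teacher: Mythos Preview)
Your outline is essentially correct as a sketch of how the semiclassical Keel--Tao theorem is proved: rescaling in time to absorb $h$ reduces matters to the classical statement with dispersive constant $h^{-\beta-\gamma}$, and then the standard $TT^*$ argument with Hardy--Littlewood--Sobolev handles the non-endpoint range while the Whitney decomposition and bilinear interpolation of Keel--Tao \cite{KT} handle $q=2$.

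However, note that the paper does not prove this theorem at all. It is stated as a quotation of the result of Keel and Tao \cite{KT} (in its frequency-localized form, as used for instance in \cite{Zhang,ZZ2}) and is invoked as a black box: the authors verify the hypotheses \eqref{est-w} for $U_k(t)=e^{it\sqrt{\mathcal{H}_{\bf A}}}\phi(2^{-k}\sqrt{\mathcal{H}_{\bf A}})$ with $h=2^{-k}$, $\beta=3/2$, $\gamma=1/2$ via Proposition~\ref{prop-w}, and then read off \eqref{est-wSt}. So there is nothing in the paper to compare your argument against; your proposal supplies a proof where the paper simply cites one.
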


Denote $U(t):=e^{it\sqrt{\mathcal{H}_{\A}}}$. Thus we can write the solution
\begin{equation}\label{4.19}
u(t,x)=\frac{U(t)+U(-t)}2
f(x)+\frac{U(t)-U(-t)}{2i\sqrt{
\mathcal{L}_{\A}}}g(x),
\end{equation}
For each $k\in \mathbb{Z}$, let $\phi_k(\lambda)=\phi(2^{-k}\lambda)$, we define
\begin{align}\label{propa}
U_k(t)=\int^\infty_0e^{it\lambda} \phi_k(\lambda) dE_{\sqrt{\mathcal{H}_{\A}}}(\lambda;x,y),
\end{align}
where $dE_{\sqrt{\mathcal{H}_{\A}}}(\lambda;x,y)$ is the spectral measure in \eqref{sp}. Then
\begin{equation}\label{d-c}
U(t)f(x)=\sum_{k\in\Z} U_k(t)f(x)=\sum_{j\in\Z}\sum_{k\in\Z} U_k(t)f_j(x),
\end{equation}
where $f_j(x)=\phi(2^{-j}\sqrt{\mathcal{H}_{\A}})f$. By using the Littlewood-Paley square function inequality \eqref{square} and Minkowski's inequality, we show that
\begin{align}\label{4.21}
\|U(t)f\|_{L_{t}^{q}(\mathbb{R}L^r(\Sigma))}
\lesssim&\Big\|\big(\sum_{j\in\mathbb{Z}}|\sum_{k\in\mathbb{Z}}U_k(t)f_j|^2\big)^{1/2}\Big\|_{L_{t}^{q}(\mathbb{R}L^r(\Sigma))}
\nonumber\\
\lesssim&\Big(\sum_{j\in\mathbb{Z}}\big\|\sum_{k\in\mathbb{Z}}U_k(t)f_j\big\|_{L_{t}^{q}(\mathbb{R}L^r(\Sigma))}
^2\Big)^{1/2}.
\end{align}\label{4.22}
%From the energy estimate, and the dispersive estimate  Propossition \ref{}, since $U_k(t)$ satisfies estimates \eqref{est-w} with $h=2^{-k}$, $\beta=3$ and $\rho=1$. Then it follows from Proposition \ref{KT} that
%\begin{equation}
%\|U_k(t)f_j\|_{L_{t}^{q}(\mathbb{R}L^r(X))}\lesssim2^{k[4(\frac12-\frac1r)-\frac1q]}\|f_j\|_{L^2(X)}\lesssim 2^{ks}\|f_j\|_{L^2(X)},
%\end{equation}
%since
%\begin{equation}\label{adm-w}
%(q,r)\in\Lambda^w_s:=\Big\{(q,r)\in[2,\infty]\times[2,\infty):\tfrac2q\leq \big(\tfrac12-\tfrac1r\big), s=2\big(\tfrac12-\tfrac1r\big)-\tfrac1q\Big\}.
%\end{equation}
The energy estimate, and the localized dispersive estimate  \eqref{est-lo} imply that $U_k(t)$ satisfies estimates \eqref{est-w} with $h=2^{-k}$, $\beta=3/2$ and $\gamma=1/2$. Then it follows from that for $(q,r)\in\Lambda^w_s$, the localized Strichartz estimates holds
\begin{equation}\label{est-wSt}
\|U_k(t)f\|_{L_{t}^{q}(\mathbb{R}L^r(\Sigma))}\lesssim2^{k[2(\frac12-\frac1r)-\frac1q]}\|f\|_{L^2(\Sigma)}=2^{ks}\|f\|_{L^2(\Sigma)},
\end{equation}
On account of $f_j=\phi_j(\sqrt{\mathcal{H}_{\A}})f$, then $\phi_k(\sqrt{\mathcal{H}_{\A}})f_j$ vanishes when $|j-k|\geq 10$. Hence
\begin{align}\label{4.23}
\Big(\sum_{j\in\mathbb{Z}}\big\|\sum_{k\in\mathbb{Z}}U_k(t)f_j\big\|_{L_{t}^{q}(\mathbb{R}L^r(\Sigma))}
^2\Big)^{1/2}\lesssim&\Big(\sum_{j\in\mathbb{Z}}\sum_{|j-k|\leq10}\|U_k(t)f_j\|_{L_{t}^{q}(\mathbb{R}L^r(\Sigma))}
^2\Big)^{1/2}\nonumber\\
\lesssim&\Big(\sum_{j\in\mathbb{Z}}2^{2js}\|f_j\|_{L^{2}(\Sigma)}
^2\Big)^{1/2}=\|f\|_{\dot{H}^s_{\bf A}(\Sigma)},
\end{align}
where we have used \eqref{est-wSt}. Here we finish the proof of Theorem \ref{thm:stri-w}.\vspace{0.2cm}

{\bf Acknowledgements}

\begin{center}

\end{center}

\end{document}